\theoremstyle{plain}
\newtheorem{lemma}{Lemma}[section]
\newtheorem{theorem}[lemma]{Theorem}
\newtheorem{cor}[lemma]{Corollary}
\theoremstyle{definition}
\newtheorem{problem}[lemma]{Problem}
\newtheorem{remark}[lemma]{Remark}
\newcommand{\twomat}[4]{ \left( \begin{array}{cc}
#1 & #2 \\
#3 & #4
\end{array} \right)}
\newcommand{\pl}[2]{{\frac{\partial #1}{\partial #2}}}
\newcommand{\zb}{\overline{z}}
\newcommand{\db}{\overline{\partial}}
\newcommand{\de}{\partial}
\newcommand{\R}{\mathbb{R}}
\newcommand{\C}{\mathbb{C}}
\newcommand{\F}{\mathbb{F}}
\newcommand{\h}{\mathcal{H}}
\newcommand{\A}{\mathcal{A}}
\newcommand{\tu}{\rightarrow}
\newcommand{\uu}{\uparrow}
\newcommand{\wtu}{\rightharpoonup}
\newcommand{\emb}{\hookrightarrow}
\newcommand{\cemb}{\subset \subset}
\newcommand{\In}{\subset}
\newcommand{\om}{\omega}
\newcommand{\dl}{{\delta}}
\newcommand{\Dl}{{\Delta}}
\newcommand{\al}{{\alpha}}
\newcommand{\ed}{{\rm d}}
\newcommand{\id}{\,\,\ed}
\newcommand{\dv}{{\rm d}^{\ast}}
\newcommand{\D}{{\nabla}}
\newcommand{\Db}{{\nabla^{\bot}}}
\newcommand{\ti}[1]{{\tilde{#1}}}
\newcommand{\eps}{{\varepsilon}}
\newcommand{\fr}[2]{\frac{#1}{#2}}
\newcommand{\Nn}{\mathcal{N}}
\newcommand{\T}{{\rm T}}
\newcommand{\N}{{\rm N}}
\newcommand{\vlinesub}[1]{\vline_{_{_{_{_{_{_{_{_{{#1}=0}}}}}}}}}}
\begin{document}
\title{Critical $\overline{\partial}$ problems in one complex dimension and some remarks on conformally invariant variational problems in two real dimensions.}
\date{\today}
\author{Ben Sharp}
\pagestyle{myheadings}
\markboth{{\sc Critical $\db$ problems}}{{\sc Critical $\db$ problems}}
\maketitle
\begin{abstract}
We will study a linear first order system, a connection $\overline{\partial}$ problem, on a vector bundle equipped with a connection, over a Riemann surface. We show optimal conditions on the connection forms which allow one to find a holomorphic frame, or in other words to prove the optimal regularity of our solution. The underlying geometric principle, discovered by Koszul-Malgrange, is classical and well known; it gives necessary and sufficient conditions for a connection to induce a holomorphic structure on a vector bundle over a complex manifold. Here we explore the limits of this statement when the connection is not smooth and our findings lead to a very short proof of the regularity of harmonic maps in two dimensions as well as re-proving a recent estimate of Lamm and Lin concerning conformally invariant variational problems in two dimensions. 
\end{abstract}
{\bf MSC classification:} 58E20, 35A23, 35J46. 
\newline {\bf Keywords:} Harmonic maps, holomorphic frames, conformally invariant Lagrangians, gauge transformations.

\section{Introduction} 

We will consider a square integrable connection on a smooth vector bundle $E^m$ over a Riemann surface $\Sigma$. 
Our vector bundle may be real or complex, however the problems we wish to consider will largely require us to complexify $E$ when it is real (unless the connection happens to be flat). Since we are working over a Riemann surface we may consider the related $\db$-problem associated to sections of $\wedge^{(1,0)}\T^{\ast}\Sigma\otimes E$. We ask: Under what circumstances can we locally find a holomorphic frame? Or, can we find a cover of $\Sigma$ with a collection of bundle trivialisations such that the transition charts are holomorphic? The latter question is equivalent to being able to find a holomorphic frame over each trivialisation. 

Since the question is of a local nature, we work with a small piece of $\Sigma$ over which $E$ is trivial, therefore we may simply consider the case $\Sigma =D\In \C$ the unit disc and $E=D\times \F^m$ where $\F= \R$ or $\C$. Now the connection is defined entirely by one-forms which we denote by $\om\in L^2(D , gl(m,\C)\otimes\wedge^1\T^{\ast}\R^2)$. The frame $S:D\tu GL(m,\C)$ that we are going to look for will solve \footnote{Initially we consider $\om$ as a $gl(m,\C)$-valued real one form, i.e. $\om = \om^x\ed x + \om^y \ed y$ with $\om^x ,\om^y :D\to gl(m,\C)$; but we can just as easily express $\om$ with respect to $\ed z$ and $\ed\zb$, at which point $\om^{\zb}:= \fr12 (\om^x+i\om^y)\ed\zb=\om^{(0,1)}$ is the $(0,1)$ part of $\om$.}
\begin{equation}\label{eqn S}
\db S =- \om^{\zb} S
\end{equation}
or, to put it another way 
\begin{equation*}
(S^{-1}\ed S + S^{-1}\om S)^{(0,1)} = 0.\footnote{The notation $(p,q)$ will  refer either to $p+q$ forms of type $\ed z^1 \wedge \dots \wedge \ed z^p \wedge \ed \zb^1 \wedge \dots \wedge \ed \zb^q$ or if $G$ is a $p+q$ form then $G^{(p,q)}$ is the projection of $G$ onto forms of type $(p,q)$. }
\end{equation*}
In words, we could say that re-writing the connection forms with respect to this new frame (or trivialisation) $S$ forces the $(0,1)$-part of the new connection forms to be zero. A classical theorem of Koszul-Malgrange \cite[Theorem 1]{KM} c.f. Theorem \ref{theorem KM} tells us that when $\om$ is smooth then we can find $S$ if and only if $F_{\om}^{(0,2)}\equiv 0$ where $$F=\ed \om + [\om,\om]\footnote{$[\om,\om]$ is a two form given by $[\om,\om](X,Y) = [\om(X),\om(Y)]$ where $[,]$ is the Lie bracket of $gl(m,\C)$.}$$ is the curvature of our connection \footnote{Perhaps the reader should compare with the analogous statement in the real setting; that one can find a parallel frame solving 
$$S^{-1}\ed S + S^{-1}\om S =0$$
if and only if $F\equiv 0$, i.e. when the connection is flat.}. Thus in our setting, under the assumption that $\om$ is smooth, one can \emph{always} find such a frame $S$. However this stops being true when we only assume $\om\in L^2$ (or even $\om\in L^{2,q}$ for all $q>1$, see section \ref{complex counter}). 
The problem is that we do not get \emph{a-priori} $L^{\infty}$ estimates for $S$ when $\om\in L^2$, which makes it impossible to guarantee that we can find an invertible matrix $S$ solving \eqref{eqn S}. However if we assume that $\|\om\|_{L^{2,1}}$ is sufficiently small then we can find $S$ via a fixed point argument and ensure that it stays a bounded distance from the identity, Theorem \ref{theorem KM L21}. It follows that $\om \in L^p$ for $p>2$ also works and we can consider $\om \in L^2$ as being a borderline case that fails to hold. Therefore we will assume a further structural condition on $\om\in L^2$ that will ensure the existence of $S$, Theorem \ref{dbar}.

A corollary of Theorem \ref{dbar} allows one to prove regularity {for} maps (or sections) $\al \in L^2(D,\C^m\otimes \wedge^{(1,0)}\T^{\ast}\C)$ solving 
\begin{equation}\label{eqn: dbar}
\db_{\omega}(\al)= \db \al + \om^{\zb}\wedge \al = 0,
\end{equation}
in particular one can show that with $S$ solving \eqref{eqn S}, we have 
$$\db(S^{-1}\al) =0 $$
and the highest regularity of $\al$ we can expect is the same as that of $S$. We remark that the PDE \eqref{eqn: dbar} is critical in the sense that we have $\db \al \in L^1$ so with standard Calderon-Zygmund estimates we can conclude that $\D \al \in L^{1,\infty}$ (a space with $L^1$ as a strict subset) i.e. that
$$|\{z\in D : |\D \al(z)|>s\}|\leq Cs^{-1}.$$ 
However we will show that we can find such an $S\in L^{\infty}\cap W^{1,2}$ at which point these estimates pass locally onto $\al$. In fact we also end up with an estimate for $|\al|^2$ in the local hardy space $h^1$ on the whole disc. The regularity for $\al$ is therefore much higher than we would expect, due to the geometric nature of the problem. Essentially $\al$ is geometrically holomorphic and when the geometry is `sufficiently nice' we can understand it to be locally genuinely holomorphic.

This theory is closely related to H\'elein's \cite{helein_conservation} regularity theory for harmonic maps from a Riemann surface to a closed Riemannian manifold $\Nn$; indeed it provides a short proof for the full regularity theory in two dimensions using \emph{only Wente-type estimates and Coulomb gauge methods} without requiring that $\T\Nn$ be trivial. The assumption that $\T\Nn$ be trivial can be made without loss of generality if $\Nn$ is sufficiently regular: When $\Nn$ is $C^4$ H\'elein proved that there is a totally geodesic embedding of $\Nn$ into a torus, thus harmonic maps into $\Nn$ lift to harmonic maps into a torus (and we may therefore consider only targets with  trivial tangent bundle). Theorem \ref{dbar} allows us to side-step this technicality {and we require the minimal regularity assumptions on $\Nn$, that it is a  $C^2$ submanifold of $\R^m$ with bounded second fundamental form (which follows trivially by the first assumption if $\Nn$ is closed)}.  We mention here that H\'elein's theory simplifies if $\Nn$ is $C^2$ with trivial tangent bundle. Under the assumption that the tangent bundle is trivial, one can employ Coulomb gauge methods and write the harmonic map equation as \eqref{eqn: dbar} with $\om^{\zb}\in L^{2,1}$ as is done in \cite{helein_conservation}. The Lorentz space $L^{2,1}$ is strictly contained in $L^2$ but contains $L^p$ for $p>2$ on bounded domains. A function $f\in L^{2,1}$ if and only if 
$$\int |\{z : |f(z)|>t\}|^{\fr 12}\id t <\infty.$$

The theory here is also related to the work of Rivi\`ere \cite{riviere_inventiones}, who generalised the regularity theory of H\'elein, where he proves the full regularity for critical points of conformally invariant elliptic Lagrangians in two dimensions by considering a geometric divergence problem (vs a geometric $\db$ problem). Specifically he considered maps $u\in W^{1,2}(B_1,\R^m)$ weakly solving 
\begin{equation}\label{eqn: riviere}
0=\dv_{\om}(\ed u) = \dv \ed u -\ast(\om\wedge\ast\ed u) = -\Dl u - \om.\D u
\end{equation}
for $\om \in L^2(B_1, so(m)\otimes \wedge^1 \T^{\ast}\R^2)$ on the unit ball $B_1\In \R^2$. This PDE is critical in the sense that the best one can do with straight forward elliptic estimates is to get estimates on $\D u$ in $L^{2,\infty}$ (i.e. $|\D u|^2 \in L^{1,\infty}$). However Rivi\`ere proved the existence of a frame $A\in L^{\infty}\cap W^{1,2}(B_1,GL(m,\R))$ (a perturbed Coulomb gauge) such that 
\begin{equation}\label{eqn: riviere gauge}
\dv (\ed A - A\om) =0
\end{equation}
when $\|\om\|_{L^2}$ is sufficiently small. This enables one to re-write \eqref{eqn: riviere} and uncovers hidden Jacobian determinant terms. By using classical Wente estimates one can show that $\D u \in L^p$ for every $p<\infty$ (see \cite{Sh_To} or \cite{riviere_laurain}). Finally he observed that critical points of conformally invariant elliptic Lagrangians in two dimensions solve a PDE of the form \eqref{eqn: riviere} to conclude the full regularity of solutions under the weakest regularity assumptions on the Lagrangian. In particular one can conclude regularity of harmonic maps into $C^2$ targets, or the regularity of conformal immersions of the disc in $\R^3$ with bounded mean curvature and finite area.

Going back to \eqref{eqn: riviere gauge} we could conclude that there exists a matrix $B\in W^{1,2}(B_1, gl(m,\R))$ such that
$$\ed A - \ast \ed B = A\om$$
or, to write it another way 
$$\db (A-iB) = A\om^{\zb}$$
(compare this with \eqref{eqn S} and remember that we cannot control $\|B\|_{L^{\infty}}$ and therefore there is no reason that $A-iB$ be invertible). 

In section \ref{sec: applications} we show that critical points of conformally invariant elliptic Lagrangians solve \eqref{eqn: dbar} and, under an added regularity assumption, we can find the frame $S$ solving \eqref{eqn S}. Unlike the case for harmonic maps we require the theory of Rivi\`ere, namely the existence of the frame $A$ solving \eqref{eqn: riviere gauge}, in order to find $S$. However this can still be used to re-prove a recent estimate of {Lamm} and Lin \cite{lamm_lin}, Theorem \ref{thm: lamm_lin}. We remark that it might be possible to drop the {added regularity} and still be able to find $S$ in this setting; either a positive or a negative answer to this question would provide further insight into these regularity problems.  

Another interesting problem would be to extend this theory to higher dimensional complex domains, but of course one would have to impose the condition that $F_{\omega}^{(0,2)} = 0$ in a weak sense (which is given for free in one dimension), and find the right borderline spaces for $\om$ to lie in. The author does not know of any geometric situation where the higher dimensional theory would apply.

\emph{Acknowledgements:} The author was supported by the Leverhulme trust, and the work was completed with funding from the European Research Council. The author would also like to thank Mario Micallef for useful discussions.

\section{Results}

Since the PDE we are trying to solve only involves $\om^{\zb}$ we may  consider local connection forms $\om \in L^2(D,u(m)\otimes\wedge^1 \T^{\ast}\R^2)$ without loss of generality (see Remark \ref{rem general to unitary}).  
The assumptions we want to impose are that such $\om$ admit the following Hodge decomposition: 
$$\om = \ed a + \dv b$$
with $a\in W^{1,2}$, $b\in W_0^{1,(2,1)}$. 
We say that such an $\om$ satisfies condition $\dagger$. If for some $\eps$ we know that 
$$\|\om\|_{L^2(D)} + \|\D b\|_{L^{2,1}(D)} \leq \eps$$
then we say that $\om$ satisfies condition $\dagger_{\eps}$. Another way of writing this condition is that $\om \in L^2(D,u(m)\otimes\wedge^1 \T^{\ast}\R^2)$ satisfies 
$$\D \Dl^{-1} (\ed \omega) \in L^{2,1}$$
with 
$$\|\om\|_{L^2(D)} + \|\D \Dl^{-1}(\ed \omega)\|_{L^{2,1}(D)} \leq \eps.$$

\begin{theorem}\label{dbar}
There exists $\eps >0$ such that whenever $\om\in L^2(D,u(m)\otimes\wedge^1\T^{\ast}\R^2)$ satisfies condition $\dagger_{\eps}$, 
there exists a change of frame $S\in L^{\infty}\cap W_{loc}^{1,2}(D,Gl(m,\C))$ such that $$\db S = -\om^{\zb} S$$
with
$$\|\rm{dist}(S,U(m))\|_{L^{\infty}(D)} \leq \fr13$$
and for any $U\cemb D$ there exists some $C=C(U)<\infty$ such that
$$ \|\D S\|_{L^2(U)} \leq C\|\om\|_{L^2}.$$
\end{theorem}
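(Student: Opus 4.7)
I would combine a smooth approximation, a uniform a priori estimate and a limiting argument. The real content is the $L^\infty$-bound on $S$ under condition $\dagger_\eps$; once that is in place the rest is standard.

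Mollify $\omega$ to smooth $\omega_n = \ed a_n + \dv b_n$ with $\|\omega_n\|_{L^2} + \|\D b_n\|_{L^{2,1}} \le \eps$ (mollify $a_n,b_n$ separately so the Hodge structure is preserved). For each smooth $\omega_n$ the classical Koszul--Malgrange theorem (Theorem~\ref{theorem KM}) produces a smooth $S_n$ with $\db S_n = -\omega_n^{\zb} S_n$, since in two real dimensions the integrability condition $F^{(0,2)}=0$ is automatic. The whole point is therefore to push uniform $L^\infty$ and $W^{1,2}_{loc}$ bounds on $S_n$ through.

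Write $b_n = f_n\,\ed x\wedge \ed y$ and $h_n := a_n - i f_n$, so that $\omega_n^{\zb} = \partial_{\zb} h_n$. With $S_n = I + V_n$, the equation is equivalent to the integral formulation $V_n = -\mathcal C(\db h_n) - \mathcal C(\db h_n \cdot V_n)$, where $\mathcal C$ is a right-inverse of $\db$ on $D$ (e.g.\ the Cauchy transform with an appropriate holomorphic correction). A naive $L^\infty$ estimate fails since $\mathcal C$ does not map $L^2$ into $L^\infty$. The idea is to integrate by parts using the Hodge structure: the $-if_n$ piece is harmless because $\nabla f_n \in L^{2,1}$ embeds into $L^\infty$ in dimension two, while for the $a_n$ piece I would integrate by parts once more against $\db$ and substitute $\db V_n = -\db h_n(I+V_n)$ from the equation itself. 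This produces a bilinear expression of the form $a_n\cdot \db h_n$ modulo lower-order pieces; the Coifman--Lions--Meyer--Semmes / Wente compensation then places the relevant Jacobian-type combinations in the local Hardy space $\h^1$, and hence into $L^\infty$ after inverting $\db$. The net outcome is a contraction estimate $\|V_n\|_\infty \le C\eps(1+\|V_n\|_\infty)$; for $\eps$ small the fixed point lies in a ball of radius $\le 1/3$, giving $\|\mathrm{dist}(S_n,U(m))\|_\infty \le 1/3$. With the $L^\infty$ bound in hand, $\|\db S_n\|_{L^2(D)} \le \|\omega_n\|_{L^2}\|S_n\|_\infty$ is immediate, and interior Calder\'on--Zygmund on the $\db$-equation with a cutoff gives $\|\D S_n\|_{L^2(U)} \le C(U)\|\omega_n\|_{L^2}$ for every $U\cemb D$. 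Standard compactness then extracts a subsequential limit $S\in L^\infty\cap W^{1,2}_{loc}(D,GL(m,\C))$ solving $\db S = -\omega^{\zb} S$, invertibility being guaranteed by $\mathrm{dist}(S,U(m))\le 1/3$.

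\textbf{Main obstacle.} The hard step is the $L^\infty$ bound on $V_n$: the $a_n$-piece of the Hodge decomposition lives only in $W^{1,2}\subset\mathrm{BMO}$ (not $L^\infty$), so one integration by parts does not close the estimate. One must integrate by parts \emph{twice} and use the equation itself to substitute $\db V_n = -\db h_n(I+V_n)$ along the way, which is what brings out the Jacobian / Hardy-space cancellation on which the Wente inequality can feed. The matrix-valued, non-commutative setting adds the technical nuisance of keeping track of commutators but does not alter the overall scheme.
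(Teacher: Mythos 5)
Your treatment of the $\dv b$ piece and your closing compactness/Calder\'on--Zygmund steps are fine, but there is a genuine gap at exactly the step you flag as the main obstacle: the $L^\infty$ bound coming from the $\ed a$ part. The double integration by parts you describe does not produce any Jacobian. After substituting $\db V_n=-\db h_n(I+V_n)$ you are left with terms of the form $a_n\,\db h_n\,S_n$, an undifferentiated $W^{1,2}$ (hence merely BMO, not $L^\infty$) factor times an $L^2$ one-form times $S_n$; this has no div-curl structure for CLMS/Wente to act on, and the boundary term $a_nS_n$ from the first integration by parts is already unbounded. More tellingly, your scheme nowhere uses that $a$ is $u(m)$-valued, and without that the conclusion is false: for $m=1$, $b=0$ and $a=\delta\log\log(e/r)\in W^{1,2}(D,\R)$ with $\delta$ small, every solution of $\db S=-\db a\,S$ has the form $S=e^{-a+g}$ with $g$ holomorphic, so $S$ cannot be bounded with bounded inverse, whereas for anti-Hermitian $a$ the same formula is unitary. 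Indeed, iterating your fixed point formally generates the exponential series for $e^{-a}$ (path-ordered in the matrix case), so the boundedness you need is equivalent to the anti-Hermitian structure and is not a smallness or compensation phenomenon that a contraction estimate $\|V_n\|_\infty\leq C\eps(1+\|V_n\|_\infty)$ can deliver when $\db h_n$ is only $L^2$.

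The paper closes this gap by a different mechanism: it applies the Coulomb gauge theorem (Theorem \ref{theorem Coulomb}) to the exact $u(m)$-valued piece $\ed a$ alone, producing a \emph{unitary} $P$ with $P^{-1}\ed P+P^{-1}\ed a\,P=\dv\eta$ and $\Dl\eta=\ed\bar P^{T}\wedge\ed P+\ed(\bar P^{T}\ed a P)$. The two-gradient wedge $\ed\bar P^{T}\wedge\ed P$ is the genuine Jacobian, and Theorem \ref{thm wente} gives $\|\D\eta\|_{L^{2,1}}\leq C\|\om\|_{L^2}^2$; the existence and unitarity of $P$ is where the compactness of $U(m)$ is spent. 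The full gauge-transformed connection $\dv\eta+P^{-1}\dv b\,P$ is then small in $L^{2,1}$, so Theorem \ref{theorem KM L21} (which is essentially the fixed-point argument you sketch, valid because $W^{1,(2,1)}\emb L^{\infty}$ in two dimensions) yields $Q$ with $\|\rm{dist}(Q,Id)\|_{L^{\infty}}\leq\fr13$, and $S=PQ$; unitarity of $P$ preserves the distance to $U(m)$. If you wish to keep your mollification framework you must insert this unitary gauge transformation before running any contraction argument.
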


Such an $S$ is called a holomorphic frame. Going back to the general case of a smooth vector bundle $(E,\pi)$ over a Riemann surface $\Sigma$ with an $L^2$ connection $D_E$, if we could find a cover of $\Sigma$, $\{U_i\}$ such that the connection forms over each $U_i$ satisfy condition $\dagger_{\eps}$, then we can skew our trivialisations by the non-smooth changes of frame $S_i$. In other words where we had a smooth diffeomorphisms $\phi_i = (\pi,\varphi_i) : \pi^{-1}(U_i)\to U_i \times \C^m$, we replace them by non-smooth $\tilde{\phi}_i(e)=(\pi(e), S_i(\pi(e)){{\varphi}_i(e))}$. The reader can check that the new transition charts $\tilde{\phi}_{ij}=S_i\phi_{ij}S_j^{-1}$ will be holomorphic\footnote{Compare with the flat scenario, where by skewing our trivialisations by parallel frames yields locally constant (rather than holomorphic) transition functions.}
 (at the expense of the trivialisations being non-smooth).
\begin{cor}\label{cor_dbar}
Let $\al\in L^2(D,\C^m\otimes \wedge^{(1,0)}\T^{\ast}_{\C}\R^2)$ and $\om \in L^2(D,u(m) \otimes \wedge^1 \T^{\ast} \R^2)$. Suppose that $\om$ satisfies condition $\dagger$ and that $\db_{\omega} \al = 0$, i.e. 
$$\db \al = -\om^{\zb} \wedge \al $$
then $\al \in L^{\infty}\cap W^{1,2} (U)$ for all $U\cemb D$. There exists an $\eps >0$ such that if $\om$ satisfies $\dagger_{\eps}$ then there exists a $C=C(U)<\infty$ such that 
$$\|\al\|_{L^{\infty}(U)} \leq C \|\al\|_{L^1}$$
and
$$ \|\D \al\|_{L^2(U)} \leq C\|\al\|_{L^1}(1+\|\om\|_{L^2}).$$
Moreover, under these assumptions we have $|\al|^2 \in h^1(D)$ with 
$$\||\al|^2\|_{h^1(D)} \leq C\|\al\|_{L^2(D)}^2.$$
\end{cor}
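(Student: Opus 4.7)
The plan is to use Theorem \ref{dbar} to find a gauge transformation $S$ that turns $\al$ into a genuinely holomorphic section, and then to transfer the interior regularity of holomorphic functions back to $\al$ via the relation $\al = S f$.

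Under condition $\dagger_{\eps}$, Theorem \ref{dbar} produces $S \in L^{\infty}\cap W^{1,2}_{loc}(D, GL(m,\C))$ solving $\db S = -\om^{\zb} S$, with $\mathrm{dist}(S, U(m)) \leq 1/3$ pointwise and $\|\D S\|_{L^2(U)}\leq C\|\om\|_{L^2}$ for $U\cemb D$. In particular $S, S^{-1}\in L^{\infty}(D)$. Since $\db(S^{-1}) = -S^{-1}(\db S)S^{-1} = S^{-1}\om^{\zb}$ distributionally, a direct Leibniz-rule computation gives
$$\db(S^{-1}\al) = \db(S^{-1})\wedge \al + S^{-1}\db \al = S^{-1}\om^{\zb}\wedge \al - S^{-1}\om^{\zb}\wedge \al = 0,$$
so $f := S^{-1}\al$ is a holomorphic $\C^m$-valued $(1,0)$-form, with $\|f\|_{L^p(D)}\leq C\|\al\|_{L^p(D)}$ for every $p\in[1,\infty]$.

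For $U\cemb D$, the mean-value property and Cauchy estimates for holomorphic functions yield $\|f\|_{L^{\infty}(U)} + \|\D f\|_{L^2(U)}\leq C(U)\|f\|_{L^1(D)}$. Writing $\al = Sf$ and applying the product rule gives
$$\|\al\|_{L^{\infty}(U)}\leq \|S\|_{\infty}\|f\|_{L^{\infty}(U)}\leq C\|\al\|_{L^1},$$
$$\|\D \al\|_{L^2(U)}\leq \|\D S\|_{L^2(U')}\|f\|_{L^{\infty}(U')} + \|S\|_{\infty}\|\D f\|_{L^2(U)}\leq C\|\al\|_{L^1}(1+\|\om\|_{L^2}),$$
for $U\cemb U'\cemb D$. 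To obtain the qualitative conclusion $\al \in L^{\infty}\cap W^{1,2}(U)$ under only the hypothesis $\dagger$ (without smallness), one covers $D$ by small discs on which the restriction of $\om$ satisfies $\dagger_{\eps}$ (by absolute continuity of the relevant norms) and patches the local estimates.

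The subtlest part is the $h^1$ bound. Decomposing $|\al|^2 = f^{\ast}S^{\ast}S f = |f|^2 + f^{\ast}(H-I)f$, where $H:=S^{\ast}S$ satisfies $\|H-I\|_{\infty}\leq C\eps$ and $H-I \in W^{1,2}_{loc}$, the main term $|f|^2$ is the squared modulus of a holomorphic $L^2$ section; the Coifman--Lions--Meyer--Semmes compensation structure (applied to pairs of harmonic conjugates coming from the real and imaginary parts of the components of $f$, together with the identity $\Dl |f|^2 = 4|\de_z f|^2$) gives $|f|^2\in h^1(D)$ with $\||f|^2\|_{h^1}\leq C\|f\|_{L^2}^2$. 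The cross term $f^{\ast}(H-I)f$ is controlled by the smallness of $\|H-I\|_{\infty}$ together with the $L^{\infty}\cap W^{1,2}$ control on $H$, which places it in a pointwise multiplier class of $h^1$ via $h^1$--$bmo$ duality. This final $h^1$ step, where one must work with $f$ only globally $L^2$ and hence not globally $W^{1,2}$, is the main technical obstacle.
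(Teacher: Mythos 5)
Your reduction to a holomorphic section is exactly the paper's argument: compute $\db(S^{-1}\al)=0$ using Theorem \ref{dbar}, write $\al=Sf$ with $f$ holomorphic, get the $L^\infty$ and $W^{1,2}$ interior bounds from standard estimates for holomorphic functions together with $S,S^{-1}\in L^\infty$ and $\|\D S\|_{L^2(U)}\leq C\|\om\|_{L^2}$, and recover the non-small case by a covering argument. That part is fine.

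The $h^1$ step is where you diverge from the paper, and your treatment of the cross term is not correct as stated. You split $|\al|^2=|f|^2+f^{\ast}(H-I)f$ with $H=S^{\ast}S$ and claim that $H-I\in L^\infty\cap W^{1,2}_{loc}$ lies ``in a pointwise multiplier class of $h^1$ via $h^1$--$bmo$ duality.'' Bounded $W^{1,2}$ functions are \emph{not} pointwise multipliers of $h^1$ (equivalently, of $bmo$): Stegenga's characterisation of $BMO$ multipliers requires a logarithmic decay of the mean oscillation which $W^{1,2}$ in two dimensions does not provide, so this step would fail. The repair is much simpler and is what the paper does: since $\phi_t\ast$ can be taken with $\phi\geq 0$ in the maximal-function characterisation of $h^1$, any measurable function dominated pointwise in absolute value by a nonnegative $h^1$ function is itself in $h^1$ with controlled norm. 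Here $|f^{\ast}(H-I)f|\leq \|H-I\|_{\infty}|f|^2$, and indeed the whole decomposition is unnecessary because $C^{-1}|f|^2\leq|\al|^2\leq C|f|^2$ directly; one only needs $|f|^2\in h^1$, which follows (as in the paper) by writing $f=F_z$ for a holomorphic primitive $F=F_1+iF_2\in W^{1,2}$ and observing $|f|^2=\pm\ast(\ed F_1\wedge\ed F_2)$, a Jacobian to which the Coifman--Lions--Meyer--Semmes estimate applies (your ``harmonic conjugates'' remark is this same structure; the identity $\Dl|F|^2=4|F_z|^2$ is not what is needed). With that substitution your proof closes and coincides with the paper's.
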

As per the introduction, re-writing everything in terms of $S$ means the connection $\db$ problem is a `genuine' $\db$ problem. The space $h^1$ is the local Hardy space, see for instance \cite[Appendix A.2]{Sh_To} for a brief introduction or \cite{goldberg}. 
\begin{remark}\label{rem general to unitary}
We remark here that given any $\hat{\omega}\in L^2(D,gl(m,\C)\otimes \wedge^1\T^{\ast}\R^2)$ we can always find a unique $\om \in L^2(D, u(m) \otimes\wedge^1 \T^{\ast}\R^2)$ such that $\om^{\zb} = \hat{\omega}^{\zb}$. Indeed if we write 
$$\hat{\omega}^{\zb} =( \hat{\omega}_1 + i\hat{\omega}_2) \ed \zb $$
with $\hat{\omega}_j: D \tu gl(m,\R)$. Then we can decompose each $\hat{\omega}_j$ into its symmetric and antisymmetric part,
$$\hat{\omega}_j = \hat{\omega}_j^S + \hat{\omega}_j^A$$
thus letting $\om^x = 2(\hat{\omega}_1^A + i\hat{\omega}_2^S) : D \tu u(m)$ and $\om^y = 2(\hat{\omega}_2^A  -i\hat{\omega}_1^S) : D \tu u(m)$ and
$$\om = \om^x \ed x + \om^y \ed y \in  L^2(D, u(m) \otimes \wedge^1\T^{\ast}\R^2)$$
we have 
$$\om^{\zb} = \fr12 (\om^x + i\om^y)\ed \zb = \hat{\omega}^{\zb}.$$

Therefore for any such $\hat{\omega}$ we can apply Theorem \ref{dbar} and Corollary \ref{cor_dbar} if $\om$ satisfies condition $\dagger_{\eps}$.
\end{remark}

\section{Applications to Harmonic maps and conformally invariant Lagrangians}\label{sec: applications}
\subsection{Harmonic maps}
When one considers a harmonic function $u:U\In\R^n\to\R^m$ there are a few equivalent viewpoints that can be used to understand the PDE that is solved. Harmonic functions $u$, are critical points of the Dirichlet energy 
$$E(v):=\fr12\int_U |\D v|^2 \id x,$$
which is equivalent to $u$ being a solution to 
$$-\Dl u = -\rm{div}(\D u) = \dv \ed u = 0.$$
One might also consider the PDE not in terms of $u$, but $\ed u$, and rather pedantically write the coupled system
$$\dv (\ed u) = 0 \,\,\,\,\,\,\,\,\,\,\,\,\,\text{and}\,\,\,\,\,\,\,\,\,\,\,\,\,\,\,\ed (\ed u) = 0$$
at which point we could say that $\ed u$ is a harmonic one form, or equivalently that each of the $m$ functions $H^i = (u^i_{x^1},\dots,u^i_{x^n}):U\to\R^n$ solve the Cauchy-Riemann equations. When $n=2$ we can more succinctly write this as 
$$\db (\de u) = 0$$
where we are now considering $U\In \C$ and $\ed = \de + \db$ is the usual splitting i.e. $\de v = \pl{v}{z}\ed z$ and similarly for $\db$. 

From now on we will restrict to considering two dimensional domains and the target a Riemannian manifold $(\Nn, h)$. Due to the conformal invariance of the problems we are looking at, we take the unit disc $B_1\In \R^2$ with the Euclidean metric as our domain. In order to be able to write down the PDE appearing below we will be implicitly using coordinates on $\Nn$ and therefore we are assuming that $u$ is at least continuous so that we may always assume that it remains in a single coordinate chart. Under this assumption we can consider the pull back bundle $u^{\ast}\T\Nn$ to be trivial and the pulled back Levi-Civita connection is defined entirely by the form 
$$\omega^i_j :=\Gamma^i_{jk}(u)\ed u^k \in L^2(B_1,gl(m,\R)\otimes\wedge^1\T^{\ast}\R^2).$$
One has that $u$ is harmonic if 
$$\ed^{\ast}_{u^{\ast}\T\Nn}(\ed u)= -\Dl u^i - \Gamma^i_{jk}(u)(\pl{u^j}{x}\pl{u^k}{x}+\pl{u^j}{y}\pl{u^k}{y})= 0.$$
Here we have considered the connection as a covariant exterior derivative 
$$\ed_{u^{\ast}\T\Nn}:\Gamma(u^{\ast}\T\Nn\otimes \wedge^k \T^{\ast}\R^2) \to \Gamma(u^{\ast}\T\Nn \otimes \wedge^{k+1} \T^{\ast}\R^2)$$
and $\ed^{\ast}_{u^{\ast}\T\Nn}$ is the formal adjoint for $k=0$.\footnote{Given a vector bundle with a connection and a trivialisation the covariant exterior derivative is given simply by (where $e_i$ is our local frame, $\omega$ are our connection forms and
$v=v^i_{\al} \ed x^{\al} \otimes e_i$)
$${\ed}_{E} (v) = \ed v + \om\wedge v = (\ed v^i_{\al}\wedge\ed x^{\al} + \om^i_j\wedge (v^j_{\al}\ed x^{\al})) \otimes e_i.$$
If we have a metric on our vector bundle and we impose that the connection be compatible then when we choose an orthonormal (unitary) frame for our vector bundle, the connection forms are skew-symmetric (skew-hermitian) and the metric is trivial with respect to our trivialisation. From here the formal adjoint $\dv_E$ on sections $z\in \Gamma(E\otimes\wedge^1\T^{\ast}\R^2)$ ($z=(z^i_x\ed x^k + z^i_y\ed y)\otimes e_i$) is locally given by
$$\dv_E (z) = \dv z -\ast (\bar{\om}\wedge\ast z) = (\dv (z^i_x\ed x^k + z^i_y\ed y) - \ast (\bar{\om}^i_j \wedge \ast (z^i_x\ed x^k + z^i_y\ed y)))e_i$$ as can be checked directly. However we are not using an orthonormal frame for our trivialisation above so one must be more careful when computing the adjoint. It turns out that we do indeed have
$$\ed^{\ast}_{u^{\ast}\T\Nn}(\ed u)= -\Dl u^i - \Gamma^i_{jk}(u)(\pl{u^j}{x}\pl{u^k}{x}+\pl{u^j}{y}\pl{u^k}{y})$$as can be checked directly.}

In this case we also have 
$$\ed_{u^{\ast}\T\Nn}(\ed u)=\ed(\ed u^i) + \Gamma^i_{jk}(u)\ed u^k\wedge\ed u^j = 0.$$
It is thus unsurprising that by considering the domain as being complex we actually have 
$$\db_{u^{\ast}\T\Nn}(\de u) = \db \de u^i + \Gamma^i_{jk}(u)\db u^j \wedge \de u^k = 0.$$

Now we make another rash assumption: suppose that the connection is flat i.e. there exists a frame $S:B_1\to GL(m,\R)$ solving 
\begin{equation}\label{eqn flat}
S^{-1}\ed S + S^{-1}\om S = 0.
\end{equation}
In this case we can re-write our PDE with respect to $S$ and we actually have the coupled system 
$$\dv (S^{-1}\ed u) = 0 \,\,\,\,\,\,\,\,\,\,\,\,\,\text{and}\,\,\,\,\,\,\,\,\,\,\,\,\,\,\,\ed (S^{-1}\ed u) = 0,$$
or equivalently
$$\db (S^{-1}\de u) =0.$$
The assumption that the connection is flat is of course too strong (unless $\Nn$ is flat), therefore being able to find $S$ solving \eqref{eqn flat} is impossible. However we may utilise the complex domain here and consider the $\db$ problem by considering $\de u$ as a section of $u^{\ast}\T\Nn\otimes\C\otimes \wedge^{(1,0)}\T_{\C}^{\ast}\R^2$ and finding a holomorphic frame $S$ for the pulled back connection for the complexified bundle.

Unfortunately we have presupposed that $u$ is continuous in order to have these observations, however the potential lack of continuity of $u$ can be overcome by considering $(\Nn, h)$ to be isometrically embedded in some Euclidean space $\R^m$ at which point we can consider the critical points of the Dirichlet energy amongst maps in
$$W^{1,2}(B_1,\Nn):=\{v\in W^{1,2}(B_1,\R^m) : v(z)\in \Nn \,\,\,\text{for almost every}\,\,\,z\in B_1\}$$ and we can write the harmonic map equation as $(\Dl u)^{\top}=0$, or the projection of $\Dl u$ onto $\T_{u}\Nn$ is zero. Therefore we have
\begin{equation}\label{harmonic map}
\Dl u + \A(u)(u_x,u_x) + \A(u)(u_y,u_y)
\end{equation}
or in cordinates
$$\Dl u^i +\A^i_{jk}(u)(\pl{u^j}{x}\pl{u^k}{x}+\pl{u^j}{y}\pl{u^k}{y}) = 0$$
where $\A^i_{jk}$ are the components of the second fundamental form of $\Nn\emb\R^m$ (actually we have first extended the second fundamental form to a neighbourhood of $\Nn$, $\Nn_{\dl}$ so that $\A:\Nn_{\dl}\to \T^{\ast}\R^m \otimes\T^{\ast}\R^m \otimes \T\R^m$ with $\A^i_{jk}(u) = \A^i_{kj}(u)$ and the vector $\{\A^i_{jk}(u)\}_{i=1}^m$ is normal to $\Nn$ for each $j$ and $k$.) -- see \cite[Chapter 1]{helein_conservation} for details. The observation of Rivi\`ere was to let $\om^i_j : = (\A^i_{jk}(u)-\A^j_{ik}(u))\ed u^k\in L^2(B_1,so(m)\otimes\wedge^1\T^{\ast}\R^2)$ be our connection forms, and using the properties of $\A$ it can be checked that we have 
$$\dv_{\omega}(\ed u) = \dv (\ed u) -\ast(\om\wedge\ast\ed u) = 0$$
from which the higher regularity can be obtained by using the perturbed Coulomb gauge $A$ (the anti-symmetry of $\om$ { and the $L^{\infty}$ bound on $\A$} are essential here).
As is our wont, we can also write
$$\ed_{\omega}(\ed u)=\ed (\ed u) + \om\wedge\ed u =0$$
--this can be checked directly by using the properties of $\A$ and implies that the Hopf differential is holomorphic.\footnote{We have $\om^x u_y = \om^y u_x$ thus we can conclude (an exercise using the anti-symmetry of $\om$) that $\Dl u$ is perpendicular to both $u_x$ and $u_y$ (of course this is obvious for harmonic maps but works more generally when we assume $\om^x u_y = \om^y u_x$). The Hopf differential $$\psi:= (u^{\ast}h)^{(2,0)}:= (u_z,u_z)\ed z \wedge\ed z=\phi \ed z\wedge \ed z$$where $(,)$ is the Euclidean inner product extended complex linearly. Therefore $$\db \phi = 2(u_{z\zb},u_z) =0.$$ }

Therefore we also have 
$$\db_{\omega}(\de u) = \db (\de u) + \om^{\zb}\wedge \de u = 0$$
with $\om^{\zb} = (\A^i_{jk}(u)-\A^j_{ik}(u))\db u^k$.

{ Under the added assumption that the normal bundle is trivial (for instance when $\Nn$ is diffeomorphic to a sphere, or an orientable hypersurface in $\R^m$ etc), we can have a global normal frame $\{\nu_{K}\}_{K=N+1}^m$ for $\N\Nn$ that is $C^1$, and we can express $\A$ with respect to this frame via the Weingarten equation (note that this is independent of the choice of orthonormal normal frame):
$$\A^i_{jk}(z):=-\sum_K \pl{\nu^j_K}{z^k}\nu_K^i$$
where $z$ is the standard coordinate of $\R^m$ and $\nu$ is extended arbitrarily off $\Nn$. Thus we can write 
$$\om^i_j = (\ed\nu^i_K(u))\nu^j_K(u) - (\ed \nu^j_K(u))\nu^i_K(u) $$where we are summing over repeated indices (i.e. over $K$), thus $(\om^{\zb})^i_j= \db\nu^i_K(u)\nu^j_K(u) - \db \nu^j_K(u)\nu^i_K(u)$. Thus for a Hodge decomposition $\om = \ed a + \dv b$ with $b\in W_0^{1,2}$ we have
$$\Dl b^i_j = 2\ed\nu^i_K(u)\wedge \ed\nu^j_K(u)  $$
and therefore $\D b\in L^{2,1}$ by Theorem \ref{thm wente} with 
$$\|\D b\|_{L^{2,1}} \leq C\|\D u\|_{L^2}^2.$$ 
Notice that here we have $C=C(\sup |\A|)$.
As mentioned earlier if $\T\Nn$ is trivial we also have an easy proof using a $\db$ problem see \cite{helein_conservation}.

Following \cite{moser_regularity} we can utilise this idea for general $C^2$ target manifolds $\Nn$ by considering a smooth partition of unity $\{\chi_{\alpha}\}$ over $\Nn$ such that over the support of each $\chi_{\alpha}$ we know that the normal bundle of $\Nn$ is trivialised by $\{\nu_{\alpha,K}\}_{K=N+1}^m$. Setting $\nu_{\alpha,K}$ to be zero outside of the support of $\chi_l$ and defining 
$$\hat{\mu}_{\alpha,K}(z):=\chi_{\alpha}(z)\nu_{\alpha,K}(z)$$ we see that $\hat{\mu}$ is smooth over $\Nn$ with 
$$|\D \hat{\mu}_{\alpha,K}(z)|\leq \sup |\A|+\sup_{\alpha} |\D \chi_{\alpha}|\leq C(\sup |\A|)$$where the second inequality follows since $\sup |\A|$ uniformly controls the diameter $R$ of intrinsic balls over which the normal bundle can be trivialised. Thus our partition of unity can be constructed by smoothing out characteristic functions over balls of a fixed radius. Now define 
$$\mu_{\alpha,K}:=\hat{\mu}_{\alpha,K}(u)$$
and note that we can write 
$$\om^i_j:= (\ed \mu_{\alpha,K}^i)\nu_{\alpha,K}^j(u) - (\ed \mu_{\alpha,K}^j)\nu_{\alpha,K}^i(u)$$
where we are summing over both $\alpha$ and $K$. 

Again for a Hodge decomposition as above we have 
$$\Dl b^i_j =(\ed \mu_{\alpha,K}^i)\wedge (\ed\nu_{\alpha,K}^j(u)) - (\ed \mu_{\alpha,K}^j)\wedge(\ed\nu_{\alpha,K}^i(u))$$
and therefore $\D b\in L^{2,1}$ with 
$$\|\D b\|_{L^{2,1}} \leq C\|\D u\|_{L^2}^2$$
(again $C=C(\sup |\A|)$) and $\om$ satisfies condition $\dagger$ with 
$$\|\om\|_{L^2}+\|\D b\|_{L^{2,1}} \leq C\|\D u\|_{L^2}$$
whenever $\|\D u\|_{L^2} \leq 1$.  

Thus, Corollary \ref{cor_dbar} immediately gives Lipschitz estimates on $u$. The full regularity (along with smooth estimates) for harmonic maps follows from an easy boot-strapping argument using standard Calderon-Zygmund and Schauder estimates.  
\begin{theorem}[H\'elein]
Suppose $u:B_1 \to \Nn$ is a weakly harmonic map where $\Nn$ is a $C^l$ submanifold of $\R^m$ such that the second fundamental form is bounded with respect to the induced metric and $l\geq 2$. Then for all $\al\in (0,1)$ there exist $\eps = \eps(\sup |\A|)$ and $C=C(\Nn,\alpha)$ such that if
$$\|\D u\|_{L^2(B_1)} \leq \eps$$
then 
$$[\D^l u]_{BMO(B_{\fr12})} + \|u\|_{C^{l-1,\alpha}(B_{\fr12})} \leq C \|\D u\|_{L^2(B_1)}.$$
\end{theorem}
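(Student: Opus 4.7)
The proof decomposes naturally into two distinct stages: first obtaining an initial Lipschitz-type regularity via Corollary \ref{cor_dbar}, then bootstrapping to the full statement using standard linear elliptic theory. The preceding discussion in the excerpt has essentially already set up the first stage: the connection form $\om^i_j = (\A^i_{jk}(u) - \A^j_{ik}(u))\ed u^k$ (anti-symmetrised using the partition-of-unity trick of Moser to handle the absence of a global normal frame) satisfies condition $\dagger$, and the Wente-type argument gives
\[ \|\om\|_{L^2(B_1)} + \|\D b\|_{L^{2,1}(B_1)} \leq C(\sup|\A|)\,\|\D u\|_{L^2(B_1)}. \]
Thus, choosing $\eps_0$ so that $C\eps_0$ is smaller than the threshold $\eps$ of Corollary \ref{cor_dbar} and requiring $\|\D u\|_{L^2(B_1)} \leq \eps_0$, condition $\dagger_{\eps}$ is satisfied.

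Next I would apply Corollary \ref{cor_dbar} with $\al = \de u \in L^2(B_1, \C^m \otimes \wedge^{(1,0)}\T^{\ast}_{\C}\R^2)$: as observed in the preceding text, the harmonic map equation combined with the anti-symmetrisation of $\A$ yields $\db_{\om}(\de u) = 0$. The corollary then gives $\de u \in L^{\infty}\cap W^{1,2}_{loc}(B_1)$ with
\[ \|\de u\|_{L^{\infty}(B_{3/4})} + \|\D \de u\|_{L^2(B_{3/4})} \leq C\|\D u\|_{L^2(B_1)}, \]
and similarly for $\db u = \overline{\de u}$, so that $\D u \in L^{\infty}_{loc}(B_1)$. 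This is the step where the analytic content of the paper is used; beyond it, the remainder is an entirely classical iteration.

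From here I would bootstrap using the ambient form \eqref{harmonic map}, $\Dl u = -\A(u)(u_x,u_x) - \A(u)(u_y,u_y)$. With $\D u \in L^{\infty}_{loc}$ and $\A \in C^{l-2}$ bounded, the right-hand side is initially in $L^{\infty}_{loc}$, so Calder\'on-Zygmund in the endpoint case yields $\D^2 u \in BMO_{loc}$ and Morrey embedding gives $u \in C^{1,\al}_{loc}$ for every $\al \in (0,1)$. This already covers the case $l=2$. For $l \geq 3$, one differentiates \eqref{harmonic map} successively $k$ times for $k = 1, \dots, l-2$: at each stage, if $u \in C^{k,\al}_{loc}$ and $\A \in C^{k}$, the differentiated right-hand side is in $C^{\al}_{loc}$, and Schauder theory promotes this to $u \in C^{k+1,\al}_{loc}$. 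The final application with $k = l-2$ uses Calder\'on-Zygmund endpoint theory once more (the RHS now being in $C^{l-2}_{loc} \subset L^{\infty}_{loc}$ after one further differentiation) to upgrade to $\D^{l} u \in BMO_{loc}$. Scaling/absorbing gives the quantitative bound against $\|\D u\|_{L^2(B_1)}$.

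The main obstacle has already been resolved by Corollary \ref{cor_dbar}: namely, extracting $L^{\infty}$ control on $\D u$ from the critical equation \eqref{eqn: dbar} without assuming $\T\Nn$ is trivial (which in the classical H\'elein approach required the moving frame machinery and the embedding-into-a-torus trick). Once that initial step is in hand, the remaining bootstrap is a standard Calder\'on-Zygmund/Schauder iteration and presents no genuine difficulty; the only care needed is to keep track of the dependence of constants on $\sup|\A|$ (which enters through the smallness threshold $\eps$) as opposed to higher geometric data of $\Nn$ (which enters only through the Schauder constants in the bootstrap).
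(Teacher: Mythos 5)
Your proposal is correct and follows essentially the same route as the paper: the anti-symmetrised connection form built via the partition-of-unity/normal-frame construction satisfies condition $\dagger_{\eps}$ by the Wente estimate, Corollary \ref{cor_dbar} applied to $\de u$ gives the Lipschitz bound, and the rest is the standard Calder\'on--Zygmund/Schauder bootstrap. The paper leaves the bootstrap as a one-line remark, so your filled-in iteration is consistent with, and slightly more explicit than, the paper's own argument.
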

We also recover the following Energy convexity theorem in \cite{colding_minicozzi_width_ricci}, from which local uniqueness of harmonic maps follows easily in two dimensions. The proof can be found in  \cite[Appendix C]{colding_minicozzi_width_ricci} however now we can assume that $\Nn$ is $C^2$ with bounded second fundamental form and we do not need to make any assumptions on the tangent bundle. 

\begin{theorem}[Colding-Minicozzi]
Let $u,v\in W^{1,2}(B_1,\Nn)$ and suppose that $u$ is weakly harmonic map where $\Nn$ is a $C^2$ submanifold of $\R^m$ with bounded sencond fundamental form. Then there exists some $\eps = \eps (\sup|\A|)$ such that if $u-v\in W_0^{1,2}$ and 
$$\|\D u\|_{L^2(B_1)} \leq \eps$$
then 
$$\int_{B_1} |\D v|^2 - |\D u|^2 \geq \fr12 \int_{B_1} |\D(v-u)|^2.$$
\end{theorem}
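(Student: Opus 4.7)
The plan is to reduce the inequality to a standard ``cross-term + square'' manipulation, use the harmonic map equation to rewrite the cross term geometrically, and then absorb it via the Hardy/\emph{bmo} duality made available by Corollary \ref{cor_dbar}. Set $w:=v-u\in W_0^{1,2}(B_1,\R^m)$ and expand
$$\int_{B_1}|\D v|^2-|\D u|^2-\tfrac12\int_{B_1}|\D w|^2 \;=\; 2\int_{B_1}\D u\cdot\D w+\tfrac12\int_{B_1}|\D w|^2,$$
so the task is to show $2\int \D u\cdot\D w +\tfrac12\int|\D w|^2\geq 0$. The flat analogue would already give zero for the first term, and the whole content of the theorem is that the ``curvature defect'' caused by $u,v\in\Nn$ is small enough to be absorbed into the quadratic term.

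Integrating by parts (legitimate since $w$ has zero trace) and using the harmonic map equation $\Dl u=-\A(u)(u_x,u_x)-\A(u)(u_y,u_y)$ together with the fact that this expression is normal to $\T_u\Nn$, I get
$$\int_{B_1}\D u\cdot\D w \;=\;-\int_{B_1}\Dl u\cdot w\;=\;\int_{B_1}\A(u)(\D u,\D u)\cdot w^{\perp_u},$$
where $w^{\perp_u}(x)$ denotes the orthogonal projection of $w(x)$ onto $\T_{u(x)}\Nn^{\perp}$. Since $u(x),v(x)\in\Nn$, the standard ``graph over the tangent space'' picture under the uniform bound on $|\A|$ yields the pointwise estimate $|w^{\perp_u}(x)|\leq C\sup|\A|\cdot|w(x)|^2$. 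Combining these,
$$\left|\int_{B_1}\D u\cdot\D w\right|\leq C(\sup|\A|)^2\int_{B_1}|\D u|^2|w|^2.$$

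To close the argument I use the hidden $h^1$ structure. Since $u$ is harmonic with small $L^2$ energy, the computations in this section place $\omega$ in class $\dagger_{\eps}$ and $\de u$ in the hypothesis of Corollary \ref{cor_dbar} with $\al=\de u$; hence $|\D u|^2\in h^1(B_1)$ with $\||\D u|^2\|_{h^1(B_1)}\leq C\|\D u\|_{L^2(B_1)}^2$. On the dual side, the two-dimensional Moser--Trudinger inequality together with John--Nirenberg gives $\||w|^2\|_{bmo(B_1)}\leq C\|\D w\|_{L^2(B_1)}^2$ for any $w\in W_0^{1,2}(B_1)$. The local $h^1$--$bmo$ duality then produces
$$\int_{B_1}|\D u|^2|w|^2 \;\leq\; C\|\D u\|_{L^2(B_1)}^2\,\|\D w\|_{L^2(B_1)}^2 \;\leq\; C\eps^2\,\|\D w\|_{L^2(B_1)}^2.$$

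Putting everything together yields $2\int \D u\cdot\D w\geq -C(\sup|\A|)^2\eps^2\int|\D w|^2$, which is absorbed by $\tfrac12\int|\D w|^2$ provided $\eps=\eps(\sup|\A|)$ is chosen small enough. The main technical obstacle is the $h^1$--$bmo$ step: one must be careful that the local Hardy estimate of Corollary \ref{cor_dbar} is available on the \emph{entire} disc $B_1$ (not only on compact subsets) so that it pairs against $|w|^2$ whose $bmo$ bound is available precisely because $w$ vanishes on $\de B_1$. This is exactly the point where the new regularity theorem for targets without trivial tangent bundle is needed, replacing the \emph{ad hoc} argument used in \cite{colding_minicozzi_width_ricci} under stronger assumptions on $\Nn$.
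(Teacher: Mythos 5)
Your overall skeleton is the right one, and it is essentially the argument the paper itself points to: the paper offers no proof beyond citing \cite[Appendix C]{colding_minicozzi_width_ricci} and observing that Corollary \ref{cor_dbar} now supplies the $h^1$ bound on $|\D u|^2$ without assuming $\T\Nn$ trivial. Expanding the energy, integrating by parts against the weak harmonic map equation, using that the tension field is normal together with the pointwise bound $|w^{\perp_u}|\leq C|w|^2$ for $u,v\in\Nn$, and reducing everything to $\int_{B_1}|\D u|^2|w|^2\leq C\eps^2\int_{B_1}|\D w|^2$ --- all of this is correct and is what \cite{colding_minicozzi_width_ricci} does.

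The gap is the claim $\||w|^2\|_{bmo(B_1)}\leq C\|\D w\|^2_{L^2(B_1)}$ for all $w\in W^{1,2}_0(B_1)$. This is false, and the cited justification runs the wrong way: John--Nirenberg gives $bmo\Rightarrow$ exponential integrability, while Moser--Trudinger gives exponential integrability of $|w|^2/\|\D w\|_{L^2}^2$; the latter does not imply membership in $bmo$, which is a strictly smaller class. Concretely, let $w_k$ be the Moser function ($w_k=\sqrt k$ on $B_{e^{-k}}$, $w_k=\log(1/|x|)/\sqrt k$ on $e^{-k}\leq |x|\leq 1$, zero outside), fix $\eta\in C^{\infty}_c(B_1)$ with $\eta(0)=1$, and set $w(x)=w_k(x)+\eta(e^kx)$. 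Then $\|\D w\|^2_{L^2}\leq 2(2\pi+\|\D\eta\|^2_{L^2})$ is bounded uniformly in $k$, but on $B=B_{e^{-k}}(0)$ one has $|w|^2=k+2\sqrt k\,\eta(e^kx)+\eta(e^kx)^2$, so with $\bar\eta:=\frac{1}{|B_1|}\int_{B_1}\eta$,
$$\frac{1}{|B|}\int_B\bigl||w|^2-(|w|^2)_B\bigr|\;\geq\;2\sqrt k\,\frac{1}{|B_1|}\int_{B_1}|\eta-\bar\eta|-C\;\longrightarrow\;\infty .$$
Hence the $h^1$--$bmo$ pairing against $|w|^2$ cannot deliver $\int|\D u|^2|w|^2\leq C\|\D u\|^2_{L^2}\|\D w\|^2_{L^2}$ by this route. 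The estimate that \emph{is} free, $\||w|^2\|_{bmo}\leq C\|w\|_{L^{\infty}}\|\D w\|_{L^2}$ (using that $u,v$ take values in the bounded set $\Nn$), is only linear in $\|\D w\|_{L^2}$ and cannot be absorbed by $\fr12\int|\D w|^2$ when $\|\D w\|_{L^2}$ is small. Note also that your use of $\eps$-regularity is not where the difficulty lies: the real issue is that $|\D u|^2$ may concentrate near $\de B_1$, exactly where $w$ vanishes, and exploiting this interplay is the content of \cite[Appendix C]{colding_minicozzi_width_ricci}. There one uses the specific structure of $|\D u|^2$ (here: $\de u=Sh$ with $S$ bounded and $h$ holomorphic, so $|\D u|^2$ is comparable to a Jacobian $\ast(\ed f_1\wedge\ed f_2)$ of a holomorphic primitive) and an integration-by-parts/absorption argument against $w\in W^{1,2}_0$, rather than a duality against $|w|^2$ alone. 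As written, your final step does not close.
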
}

\subsection{Conformally invariant Lagrangians and an estimate of Lamm and Lin} 

Here we will recover (and marginally improve) the following result of Lamm and Lin (stated as a Corollary in \cite{lamm_lin}).
\begin{theorem}\label{thm: lamm_lin}
{ Let $\Nn\emb \R^m$ be an isometrically embedded, closed Riemannian manifold which is $C^2$ with bounded second fundamental form. Let $\gamma\in C^{1,1}(\Nn,\wedge^2\T^{\ast}\Nn)$ then every critical point in $W^{1,2}(B_1,\Nn)$ of the Lagrangian\footnote{It is known that all Lagrangians of this form are conformally invariant, moreover any Lagrangian that is conformally invariant and quadratic in the gradient takes this form when it is $C^2$ regular -- see \cite{gruter}.}$$F (u) = \int_{B_1} (\fr12|\D u|^2 \ed x\wedge\ed y + u^{\ast}\gamma)$$
solves 
$$\db_{\omega} (\de u) = 0$$
where 
$$\om^i_j:=(\ed \mu_{\alpha,K}^i)\nu_{\alpha,K}^j(u) - (\ed \mu_{\alpha,K}^j)\nu_{\alpha,K}^i(u)+ \ast\lambda^i_{jk}(u)\ed u^k$$ (see the previous section for definitions if necessary).} Here $\lambda^i_{jk}(u) = (\ed \pi_{\Nn}^{\ast}(\gamma))(u)(e_i,e_j,e_k)$ where $\pi_{\Nn}$ is the orthogonal projection onto $\Nn$ defined in some small tubular neighbourhood. Moreover $\om$ satisfies condition $\dagger_{\eps}$ with 
$$\eps\leq C\|\D u\|_{L^2(B_1)}.$$
Then whenever $E(u)$ is sufficiently small we have 
$$\||\D u|^2\|_{h^1(B_1)} \leq C\|\D u\|_{L^2(B_1)}^2$$ and locally smooth estimates for $u$ in terms of $E(u)$. 
\end{theorem}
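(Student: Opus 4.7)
The plan is to reduce Theorem~\ref{thm: lamm_lin} to Corollary~\ref{cor_dbar} in two steps: identify the Euler--Lagrange equation as $\db_\om(\de u)=0$, and verify that $\om$ satisfies $\dagger_\eps$ with $\eps\leq C\|\D u\|_{L^2}$. Computing the first variation of $F$, the Dirichlet term reproduces the harmonic-map operator $\Dl u + \A(u)(\D u,\D u)$, which after using the partition-of-unity normal frames $\mu_{\al,K},\nu_{\al,K}$ from the preceding subsection assembles into an $so(m)$-valued connection form $\om_H$. The variation of $\int u^*\gamma$ produces an additional contribution from $d(\pi_\Nn^*\gamma)$; since $d\gamma$ is a $3$-form on $\Nn$, its ambient components $\lambda^i_{jk}$ are totally antisymmetric and the contribution is exactly $*\lambda(u)\wedge\ed u$, giving the summand $\om_L:=*\lambda(u)\ed u$. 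Thus the EL equation reads $\db_\om(\de u)=0$ with $\om=\om_H+\om_L$ as stated.

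Step~2 (condition $\dagger_\eps$, the main obstacle). Treat $\om_H$ and $\om_L$ separately. For $\om_H$ the argument of the previous subsection applies verbatim: the co-exact Hodge potential $b_H$ satisfies $\Dl b_H\sim\ed\mu(u)\wedge\ed\nu(u)$, a Jacobian in $\D u$, and Wente's estimate yields $\|\D b_H\|_{L^{2,1}}\leq C\|\D u\|_{L^2}^2$. For $\om_L$ the situation is more delicate: direct computation of $\Dl b_L=-*\ed\om_L$ produces a term $\lambda(u)\Dl u$, only in $H^{-1}$ a priori, with no manifest Jacobian structure. Here one invokes Rivi\`ere's theorem \cite{riviere_inventiones}: the full $\om$ is antisymmetric with $\|\om\|_{L^2}\leq C\|\D u\|_{L^2}$, so provided $E(u)$ is sufficiently small there exists a frame $A\in W^{1,2}\cap L^\infty(B_1,GL(m,\R))$ close to the identity satisfying $\dv(\ed A-A\om)=0$, equivalently $\ed A-A\om=*\ed\xi$ for some $\xi\in W^{1,2}$. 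Solving for $\om$ gives $\om=A^{-1}\ed A-A^{-1}*\ed\xi$; taking $\ed$ of this expression trades $\lambda(u)\Dl u$ for Jacobian-type quantities $\ed A^{-1}\wedge\ed A$ together with terms that, after using $\ed\xi=*\ed A-*(A\om)$ to eliminate $\xi$, further reduce to Jacobians plus an antisymmetric quadratic form in $\D A$ and $\om$. A final application of Wente's estimate, exploiting the antisymmetry of $\om$ and the $L^\infty$ bounds on $A,A^{-1}$, yields $\|\D b_L\|_{L^{2,1}}\leq C\|\D u\|_{L^2}^2$ and completes the verification of $\dagger_\eps$.

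Step~3 (conclusion). With $\dagger_\eps$ in hand, Corollary~\ref{cor_dbar} applied to $\al=\de u$ gives at once $|\de u|^2\in h^1(B_1)$ with $\||\D u|^2\|_{h^1}\leq C\|\D u\|_{L^2}^2$, together with $\de u\in L^\infty\cap W^{1,2}_{\rm loc}$, i.e.\ local Lipschitz regularity of $u$. Using the $C^{1,1}$ regularity of $\gamma$ and the $C^2$, bounded-$\A$ assumption on $\Nn$, the EL equation becomes a semilinear system with right-hand side of improving regularity, and a standard Calder\'on--Zygmund plus Schauder bootstrap produces the locally smooth estimates controlled by $E(u)$. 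The delicate point throughout is Step~2 for $\om_L$: without Rivi\`ere's gauge the $H^{-1}$ term $\lambda(u)\Dl u$ cannot be tamed, which is why, unlike the harmonic-map case, the present theory does not entirely bypass \cite{riviere_inventiones}.
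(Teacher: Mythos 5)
Your overall architecture matches the paper's: derive $\db_{\om}(\de u)=0$ from the symmetries of $\A$ and of $\lambda$ (the latter being totally antisymmetric as components of the $3$-form $\ed\pi_{\Nn}^{\ast}\gamma$), verify condition $\dagger_{\eps}$ via Rivi\`ere's gauge, and conclude with Corollary \ref{cor_dbar}. The gap is in your Step 2, at exactly the point you flag as delicate. Having written $\om = A^{-1}\ed A - A^{-1}\ast\ed\xi$, you propose to ``eliminate $\xi$'' by substituting $\ast\ed\xi = \ed A - A\om$ back in and then to close the estimate with Wente applied to ``Jacobians plus an antisymmetric quadratic form in $\D A$ and $\om$.'' Carrying out that substitution gives
$$\ed\om = \ed A^{-1}\wedge\ed A - \ed A^{-1}\wedge(\ed A - A\om) = \ed A^{-1}\wedge A\om,$$
which is a bare product of two $L^2$ quantities with no div--curl structure whatsoever: Theorem \ref{thm wente} requires a Jacobian $\ed a\wedge\ed b$, and the antisymmetry of $\om$ does not manufacture one here. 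Indeed, if antisymmetry plus Wente sufficed to place $\D\Dl^{-1}(\ed\om)$ in $L^{2,1}$, condition $\dagger$ would hold for every antisymmetric $L^2$ connection, contradicting the counterexample of Section \ref{complex counter}, which is $so(2)$-valued. So your argument for $\|\D b_L\|_{L^{2,1}}\leq C\|\D u\|_{L^2}^2$ does not go through as written.

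The fix is to \emph{not} eliminate $\xi$. The paper keeps the decomposition $\ed\om = \ed A^{-1}\wedge\ed A - \ed(A^{-1}\ast\ed B)$ and uses two separate inputs: Wente for the genuine Jacobian $\ed A^{-1}\wedge\ed A$, and, for the second term, the boundedness of the zeroth-order operator $\D\Dl^{-1}\ed$ on $L^{2,1}$ together with the estimate $\|\D B\|_{L^{2,1}(B_1)}\leq C\|\D u\|_{L^2(B_1)}$. That last estimate is the real content of the step; it does not follow from Rivi\`ere's construction alone and is imported from the proof of Proposition 4.1 of \cite{lamm_lin} (it uses the specific structure of $\om$ as a critical point of $F$, not merely its antisymmetry). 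Your proposal needs either to cite that estimate or to reprove it; as it stands the term $\lambda(u)\Dl u$ has only been moved around, not tamed. The rest of your write-up (Step 1 and Step 3, and the treatment of $\om_H$ by the harmonic-map argument) is consistent with the paper.
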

\begin{proof}[Proof of theorem \ref{thm: lamm_lin}]
First of all notice that $\om\in L^2(B_1,u(m)\otimes \wedge^1\T^{\ast}\R^2)$ (since it is real and antisymmetric). We already know that (see \cite{riviere_inventiones} or \cite{helein_conservation}) 
$$\dv_{\omega}(\ed u) = 0$$ and it follows from the symmetries of $\A$ and $\lambda$ that 
$$\ed_{\omega}(\ed u) = \ed (\ed u) + \om\wedge\ed u = 0.$$
Therefore we can conclude that 
$$\db_{\omega}(\de u) = \db (\de u) + \om^{\zb}\wedge \de u = 0.$$

We now use Rivi\`ere's decomposition to find $A$ and $B$ solving
$$\ed A - A\om =\ast \ed B.$$
Inspecting the proof of \cite[Proposition 4.1]{lamm_lin} we have that 
$$\|\D B\|_{L^{2,1}(B_1)}\leq C\|\D u\|_{L^2(B_1)}.$$
Therefore 
$$\ed \om = \ed A^{-1}\wedge \ed A -  \ed (A^{-1}\ast\ed B)$$
from which we can conclude that 
$$\|\D \Dl^{-1} (\ed \om)\|_{L^{2,1}(B_1)} \leq C\|\D u\|_{L^2(B_1)}.$$ The rest of the proof follows from applying Corollary \ref{cor_dbar}.

\end{proof}

\section{Optimality of condition $\dagger$}\label{complex counter}

Here we present an example to show that the condition on the Hodge decomposition is sharp. 

Consider $\alpha : D \rightarrow \mathbb{C}^2$ given by $\alpha(z) = \frac{1}{z \log (\frac{e}{|z|})} (1, -i)\ed z \in L^2(D,\mathbb{C}^2\otimes \wedge^{(1,0)}\T^{\ast}_{\C}D)$ and we define $\om \in L^2(D,so(2)\otimes \wedge^1 \T^{\ast}\R^2)$ by 

\begin{equation*}
 \omega = \frac{1}{r^2\log(\frac{e}{r})}  \left( \begin{array}{cc}
0 & 1  \\
-1 & 0  \\
\end{array} \right) (y \ed x - x \ed y)
\end{equation*}
so that

\begin{equation*}
 \omega^{\zb} = \frac{i/2}{\bar{z}\log(\frac{e}{|z|})}  \left( \begin{array}{cc}
0 & -1  \\
1 & 0  \\
\end{array} \right) \ed \zb.
\end{equation*}
A short { calculation} yields that 

\begin{equation*}
\frac{\partial \alpha}{\partial \bar{z}}\ed \zb \wedge \ed z = \frac{1/2}{(|z| \log (\frac{e}{|z|}))^2} (1, -i)\ed \zb \wedge \ed z = -\omega^{\zb} \wedge \alpha.
\end{equation*}
Therefore $\db_{\om}\alpha = 0$ but $$\al \notin L^{\infty}_{loc}(D)$$and$$\frac{\partial \alpha}{\partial z} = \left( \frac{-1}{z^2  \log (\frac{e}{|z|})} + \frac{1/2}{(z \log (\frac{e}{|z|}))^2 }\right) (1, -i) \notin L_{loc}^1(D)$$
thus Theorem \ref{dbar} cannot hold in this case. 

It is easy to see that $$\om = \ast\ed u\twomat{0}{1}{-1}{0}$$ where $u=\log \log (\fr{e}{r})$ is the Frehse example, thus setting $b=\ast u$ one can easily check that $\dv b=\ast\ed u \in L^{2,q}$ for all $q>1$ but $\dv b\notin L^{2,1}$, hence condition $\dagger$ is sharp in this sense.  

\section{Proof of the regularity result, Corollary \ref{cor_dbar}
}\label{proof_cor_dbar}
Suppose that $\om$ satisfies $\dagger_{\eps}$ ($\eps$ given by Theorem \ref{dbar}). 
We check that 
$$\db(S^{-1}\al) = -S^{-1}\db S S^{-1}\wedge \al - S^{-1}\om^{\zb}\wedge \al = 0.$$
Therefore $\al = S h$ for some holomorphic $h$ and the estimates follow by standard theory. A simple covering argument completes the first part of the proof.

The proof of the final assertion (that $|\al|^2 \in h^1$) follows from the following fact that is easily verified: Given a holomorphic function $h \in L^2(D)$, then $|h|^2 \in h^1(D)$ with 
$$\||h|^2\|_{h^1(D)} \leq C\|h\|_{L^2(D)}^2.$$
To see this first notice that $h= f_z$ for some holomorphic $f=f_1+if_2\in W^{1,2}(D)$ (this follows from the Poincar\'e lemma, for instance). Thus we have (since $f$ is holomorphic) 
$$|h|^2 = |f_z|^2 = -\ast (\ed  f_1 \wedge \ed f_2) \in h^1(D)$$
by the main result in \cite{clms} (coupled with an extension argument).

In our case we have $\al = S h$ so that there exists some $C$ with 
$$C^{-1}|h|^2\leq |\al|^2 \leq C|h|^2.$$
Thus we have 
$$\||\al|^2\|_{h^1(B_1)} \leq C\||h|^2\|_{h^1(B_1)} \leq C\|h\|_{L^2(D)}^2 \leq C\|\al\|_{L^2(D)}^2. $$ 

\section{Proof of the existence of a holomorphic gauge, Theorem \ref{dbar}}
We start by finding the Coulomb frame associated to $\ed a$; using Theorem \ref{theorem Coulomb} we can find $P\in W^{1,2}(D,U(m))$ and $\eta \in W_0^{1,2}(D, u(m)\otimes \wedge^2 \T^{\ast} D)$ such that 
\begin{equation}\label{frameda}
P^{-1}\ed P + P^{-1}\ed a P = \dv \eta
\end{equation}
and 
$$\|\D P\|_{L^2(D)} + \|\eta\|_{W^{1,2}(D)} \leq C\|\ed a\|_{L^2(D)} \leq C\|\om\|_{L^2(D)}.$$

Thus on $D$ we have a solution to 
$$\Dl \eta = \ed \bar{P}^{T} \wedge \ed P + \ed (\bar{P}^{T}\ed a P).$$
Or, in coordinates we have
$$\Dl \eta^i_j = \ed \bar{P}^k_i \wedge \ed P^k_j + \ed (\bar{P}^l_i P^k_j)\wedge \ed a^l_k.$$Again we sum over repeated indices here so that we sum over $k$ in the first term, and both $k$ and $l$ in the second. 

The estimates from Theorem \ref{thm wente} give 
$$\|\D \eta\|_{L^{2,1}(D)} \leq C\|\om\|_{L^2(D)}^2.$$

Now we check how $P$ transforms $\om$, by \eqref{frameda} 
we have
\begin{equation}\label{frameom}
P^{-1}\ed P + P^{-1}\om P = \omega_P= \dv \eta + P^{-1}\dv b P \in L^{(2,1)}(D) 
\end{equation}
and 
$$\|\dv \eta + P^{-1}\dv b P\|_{L^{2,1}(D)} \leq C\eps.$$
We can see here the significance of condition $\dagger$, essentially it allows us to change the connection forms so that the whole of the transformed  connection lies in $L^{2,1}$. 

We can now take the $(0,1)$-part of \eqref{frameom} 
to give 
\begin{equation}
P^{-1}\db P + P^{-1}\om^{\zb} P = \ast\db\ast \eta + \ast P^{-1}\db \ast b P \in L^{(2,1)}(D) 
\end{equation}
which after applying Theorem \ref{theorem KM L21} (by setting $\eps$ small enough)
gives us the existence of some $Q\in C^0\cap W^{1,(2,1)}_{loc}(D,GL(k,\C))$ satisfying
$$\db Q = -(\ast\db\ast \eta + \ast P^{-1}\db \ast b P)Q,$$
$$\|\rm{dist}(Q,Id)\|_{L^{\infty}(D)} \leq \fr13$$
and for any $U\cemb D$ there exists $C=C(U)<\infty$ such that
$$\|\D Q\|_{L^{2,1}(U)} \leq C\|\om\|_{L^2(D)}.$$

Thus we have 
$$P^{-1}\db P + P^{-1}\om^{\zb} P = -\db Q Q^{-1}$$ 
and therefore setting $S=PQ \in L^{\infty}(D,GL(k,\C))\cap W_{loc}^{1,2}(D, GL(k,\C))$ we have
$$\db S = -\om^{\zb} S$$
with the desired estimates. 

\section{A few remarks}
We could generalise this, and simply consider maps $v\in L^2(B_1, \C^m \otimes \wedge^1 \T^{\ast}\R^2)$ solving 
$$\dv_{\omega}(v)=0$$
and 
$$\ed_{\omega}(v) =0$$
for some connection $\om\in L^2(B_1,u(m)\otimes\wedge^1 \T^{\ast}\R^2)$. As above we can check that $v^{(1,0)}\in L^2(B_1,\C^m\otimes \wedge^{(1,0)}\T^{\ast}_{\C}\C)$ solves 
$$\db_{\om}(v^{(1,0)}) = 0.$$
Now we can ask, under what conditions can we find a holomorphic change of frame $S$ as in Theorem \ref{dbar} 
in order to conclude $v\in (L^{\infty}\cap W^{1,2})_{loc}$. In general we cannot do this unless $\om$ satisfies condition $\dagger$ because of the counter-example presented in section \ref{complex counter}. 
However we are still free to change our frame via a map $P\in W^{1,2}(B_1,U(m))$, and writing $\ti{v}^{(1,0)}:=P^{-1}v^{(1,0)}$ we have 
$$\db_{\omega_P} (\ti{v}^{(1,0)}) = 0$$
where 
$$P^{-1}\ed P + P^{-1}\om P = \omega_P.$$
Now we can ask whether $\omega_P$ satisfies condition $\dagger$? In particular this is the case if $\ed (\omega_P) = 0$ (the `opposite' of what is achieved in considering a Coulomb frame) or $\ed (\omega_P) \in \h^1$. More generally this is true if 
$$\D \Dl^{-1} (\ed (\omega_P)) = \D \Dl^{-1}(\ed P^{-1}\wedge\ed P + \ed (P^{-1}\om P)) \in L^{2,1}.$$ 
Therefore because of Theorem \ref{thm wente} we can reduce this condition to being able to find a frame $P$ such that 
$$\D \Dl^{-1} (\ed (P^{-1}\omega P)) \in L^{2,1}.$$

The bottom line here is the following:
\begin{theorem}
Let $\om \in L^2(D,u(m)\otimes\wedge^1 \T^{\ast}\R^2)$, and suppose there exists a change of frame $P\in W^{1,2}(B_1,U(m))$ such that 
$$\omega_P = P^{-1}\ed P + P^{-1}\om P$$
satisfies condition $\dagger$. Then there exists $\eps >0$ such that whenever $\omega_P$ satisfies condition $\dagger_{\eps}$  
there exists a change of frame $S\in L^{\infty}\cap W^{1,2}(D_{\fr23},Gl(k,\C))$ such that $$\db S = -\om^{\zb} S$$
with
$$\|\rm{dist}(S,U(m))\|_{L^{\infty}(D)} \leq \fr13$$
and
$$ \|\D S\|_{L^2(D_{\fr12})} \leq C\|\om\|_{L^2}.$$
\end{theorem}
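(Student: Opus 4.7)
The plan is to reduce the statement to Theorem~\ref{dbar} applied to the transformed connection $\omega_P$ rather than $\om$ itself. Since $\omega_P$ satisfies condition $\dagger_\eps$ by hypothesis, Theorem~\ref{dbar} immediately provides a frame $T\in L^{\infty}\cap W^{1,2}_{loc}(D, GL(m,\C))$ solving $\db T = -(\omega_P)^{\zb} T$ with $\|\rm{dist}(T,U(m))\|_{L^{\infty}(D)} \leq \fr13$ and $\|\D T\|_{L^2(U)} \leq C(U)\|\omega_P\|_{L^2}$ for every $U\cemb D$. My proposal is to set $S:=PT$ and check that $S$ inherits from $T$ and $P$ all the properties required by the conclusion.

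For the $\db$-equation, I would take the $(0,1)$-part of the defining relation $\omega_P = P^{-1}\ed P + P^{-1}\om P$ to get $(\omega_P)^{\zb} = P^{-1}\db P + P^{-1}\om^{\zb} P$. Applying the Leibniz rule to $S=PT$ and substituting the equation for $T$ then yields
$$\db S = (\db P)T + P\,\db T = (\db P)T - P\bigl(P^{-1}\db P + P^{-1}\om^{\zb} P\bigr)T = -\om^{\zb} S,$$
as required. The distance bound is automatic: since $P$ takes values in $U(m)$ and $U(m)$ is invariant under left multiplication by unitaries, $\rm{dist}(PT,U(m)) = \rm{dist}(T,U(m))\leq \fr13$ pointwise, and in particular $\|S\|_{L^{\infty}}\leq \fr43$.

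For the gradient estimate, rearranging $\omega_P = P^{-1}\ed P + P^{-1}\om P$ gives $\ed P = P\omega_P - \om P$, and since $|P|\equiv 1$ this yields $\|\D P\|_{L^2(D)}\leq \|\omega_P\|_{L^2} + \|\om\|_{L^2}$. Combining this with the bound $\|T\|_{L^{\infty}}\leq \fr43$ from the previous step and the local $W^{1,2}$ bound on $T$ from Theorem~\ref{dbar}, the Leibniz rule produces
$$\|\D S\|_{L^2(D_{\fr12})} \leq \fr43 \|\D P\|_{L^2(D_{\fr12})} + \|\D T\|_{L^2(D_{\fr12})} \leq C\bigl(\|\omega_P\|_{L^2} + \|\om\|_{L^2}\bigr),$$
which gives the asserted bound after absorbing the small quantity $\|\omega_P\|_{L^2}\leq \eps$ into the constant.

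The only real obstacle is bookkeeping: one must justify the Leibniz rule for the product of the $U(m)$-valued $W^{1,2}$ map $P$ with the bounded $W^{1,2}_{loc}$ map $T$, but this is standard since both factors are bounded and locally in $W^{1,2}$ so that smooth approximation is available. Beyond this, the argument is essentially just a conjugation of Theorem~\ref{dbar} by $P$, and no additional Wente-type or fixed-point estimates are required; the whole point of the hypothesis is precisely that it allows $\om$ to be replaced by something to which Theorem~\ref{dbar} already applies.
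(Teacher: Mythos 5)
Your construction is exactly the one the paper intends (the paper states this theorem as the ``bottom line'' of the preceding discussion and gives no separate proof): apply Theorem \ref{dbar} to $\omega_P$, which is $u(m)$-valued since $P$ is unitary and $\om$ is $u(m)$-valued, to get $T$ with $\db T=-(\omega_P)^{\zb}T$, and set $S=PT$. Your verification of $\db S=-\om^{\zb}S$ via $(\omega_P)^{\zb}=P^{-1}\db P+P^{-1}\om^{\zb}P$ is correct, as is the observation that $\rm{dist}(PT,U(m))=\rm{dist}(T,U(m))$ because left multiplication by a unitary is an isometry of $gl(m,\C)$ preserving $U(m)$. The Leibniz-rule bookkeeping for a product of bounded $W^{1,2}$ maps is indeed standard.

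The one genuine gap is the last step: you cannot ``absorb the small quantity $\|\omega_P\|_{L^2}\leq\eps$ into the constant.'' An additive term bounded by $\eps$ is not dominated by $C\|\om\|_{L^2}$ for any universal $C$; in the extreme case $\om=0$ with $P$ nonconstant and $\omega_P=P^{-1}\ed P$ satisfying $\dagger_{\eps}$, your $S=PT$ is a nonconstant bounded holomorphic matrix on $D$, so $\|\D S\|_{L^2(D_{\fr12})}>0$ while $C\|\om\|_{L^2}=0$. What your argument actually proves is
$$\|\D S\|_{L^2(D_{\fr12})}\leq C\left(\|\om\|_{L^2}+\|\omega_P\|_{L^2}\right)\leq C\left(\|\om\|_{L^2}+\|\D P\|_{L^2}\right),$$
using $\ed P=P\omega_P-\om P$, and this is the estimate that the construction can deliver; the constant must see $P$ through $\|\D P\|_{L^2}$ (or $\|\omega_P\|_{L^2}$). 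This appears to be a looseness in the statement of the theorem itself rather than a defect of your strategy --- compare with Theorem \ref{dbar}, where the analogous factorisation $S=PQ$ works because there $P$ is the Coulomb frame and satisfies $\|\D P\|_{L^2}\leq C\|\om\|_{L^2}$ --- but you should state the corrected estimate rather than claim the one written.
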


\subsection{Conformal immersions of surfaces into Riemannian manifolds} 

In this section we consider a conformal immersion $u:B_1 \to \Nn \emb \R^m$ with bounded area and mean curvature $H:B_1\to \R^m$. It is well known that $u$ solves 
\begin{equation}\label{eqn conf_imm}
\tau (u)  = H (|u_x|^2 + |u_y|^2)
\end{equation}
where $\tau(u)^i:= -\Dl u^i - \A^i_{jk}\D u^k \cdot \D u^j$ is the tension field of $u$. Since $u$ is conformal we also have
$$|u_x|^2 - |u_y|^2 = \langle u_x, u_y \rangle  = 0$$
and we consider the surface $\Sigma = (B_1, \rho^2(\ed x^2 + \ed y^2))$ so that $u$ is an isometry $u:\Sigma \to u(B_1)$ and $\rho = |u_x|$.
When $H\in L^2(\Sigma)$ we can find $\om\in L^2(B_1,so(m)\otimes\T^{\ast}\R^2)$ such that \eqref{eqn conf_imm} can be written 
\begin{equation*}
\dv_{\omega}(\ed u) = \ed_{\omega}(\ed u) = \db_{\omega}(\de u) = 0.
\end{equation*}
To see this let 
\begin{eqnarray*}
 \om^i_j &:=& (\omega_{\Nn})^i_j + (\omega_H)^i_j\\
&:=& {(\ed \mu_{\alpha,K}^i)\nu_{\alpha,K}^j(u) - (\ed \mu_{\alpha,K}^j)\nu_{\alpha,K}^i(u)}+ (H^i\ed u^j - H^j\ed u^i)
\end{eqnarray*}
so that 
\begin{eqnarray*}
\ed_{\omega}(\ed u) &=& \ed (\ed u^i) + \om^i_j\wedge \ed u^j \\
&=&  (\A^i_{jk}(u) - \A^j_{ik}(u))\ed u^k\wedge\ed u^j +  (H^i\ed u^j - H^j\ed u^i)\wedge \ed u^j \\
&=& 0,
\end{eqnarray*}
and 
\begin{eqnarray*}
\dv_{\omega}(\ed u) &=& \dv (\ed u^i) -\ast (\om^i_j \wedge\ast\ed u^j) \\
&=& -\Dl u^i - (\A^i_{jk}(u) - \A^j_{ik}(u))\ast(\ed u^k\wedge\ast\ed u^j) - \ast ((H^i\ed u^j - H^j\ed u^i)\wedge \ast\ed u^j) \\
&=& \tau(u) - H(|u_x|^2 + |u_y|^2) = 0,
\end{eqnarray*}
which together imply that 
\begin{eqnarray*}
\db_{\om}(\de u) &=& \db \de u + \om^{\zb}\wedge \de u \\
&=& \fr14 \Dl u^i \ed \zb\wedge\ed z + (\A^i_{jk}(u) - \A^j_{ik}(u))\db u^k \wedge \de u^j + (H^i\db u^j - H^j\db u^i)\wedge\de u^j \\
&=& -\fr14 \dv_{\omega}(\ed u)\ed \zb \wedge\ed z + \fr{1}{2}\ed_{\omega}(\ed u) = 0.
\end{eqnarray*}

We therefore see that for $H\in W^{1,2}(B_1 , \R^m)$ with $H\in L^2(\Sigma)$ there exists $\eta = \eta (\Nn, m, \eps)$ such that if 
$$Area(u(B_1)) + \|\D H\|_{L^2(B_1)} + \|H\|_{L^2(\Sigma)} \leq \eta$$
then $\om$ satisfies $\dagger_{\eps}$. 

\begin{problem}
The requirement that $H\in W^{1,2}$ does not seem to be natural, therefore one could ask whether only considering $H\in L^2(\Sigma)$ and 
$$Area(u(B_1)) + \|H\|_{L^2(\Sigma)}\leq \eta$$ is enough to find a holomorphic gauge? Using the previous section this would amount to finding a change of frame $P\in W^{1,2}(B_1,SO(m))$ such that 
$$P^{-1}\omega_H P$$ satisfies condition $\dagger$ with 
$$\|\D \Dl^{-1} (\ed (P^{-1}\om_H P))\|_{L^{2,1}(B_1)} \leq \|H\|_{L^2(\Sigma)}.$$ In the next section we essentially show that this is possible with $P=Id$ under the (strong) assumptions that the mean curvature is parallel, and $\Nn = \R^m$. 

\end{problem}

\subsubsection{Parallel mean curvature}
Here we consider the situation where $\Nn = \R^m$ and $u(B_1)$ has parallel mean curvature (PMC). This condition means that $\Db H = 0$ where $\Db$ is the induced connection on the normal bundle of $\Sigma$. This is equivalent to the condition that $\pl{H}{x}$ and $\pl{H}{y}$ are tangent to $\Sigma$ and therefore we may conclude that $|H|^2$ is a constant (since $H$ is normal to $\Sigma$). Moreover we will use the expressions 
$$\pl{H}{x} = H_x = \langle H_x , u_x \rangle \fr{u_x}{\rho^2} + \langle H_x , u_y \rangle \fr{u_y}{\rho^2},$$
$$\pl{H}{y} = H_y = \langle H_y , u_x \rangle \fr{u_x}{\rho^2} + \langle H_y , u_y \rangle \fr{u_y}{\rho^2},$$
which hold since we have PMC. Also 
$$\langle H_x , u_x \rangle = - \langle H, u_{xx} \rangle,$$
$$\langle H_y , u_y \rangle = - \langle H, u_{yy} \rangle,$$and 
$$\langle H_x , u_y \rangle = \langle H_y, u_x\rangle$$
which hold simply because $H$ is normal.

Now, we still have 
\begin{equation*}
\dv_{\omega_H}(\ed u) = \ed_{\omega_H}(\ed u) = \db_{\omega_H}(\de u) = 0
\end{equation*}
and a computation using the fact that we have PMC gives 
\begin{eqnarray*}
(\ed \om_H)^i_j &=& \ed H^i \wedge \ed u^j - \ed H^j \wedge \ed u^i \\
&=& (H^i_xu^j_y  - H^j_x u^i_y + H^j_y u^i_x - H^i_yu^j_x)\ed x\wedge\ed y\\
&=& (\langle H_x, u_x \rangle \fr{u_x^iu_y^j - u_x^ju_y^i}{\rho^2} + \langle H_y ,u_y \rangle \fr{u_x^iu_y^j - u_x^ju_y^i}{\rho^2} )\ed x \wedge\ed y \\
&=& -\fr{\langle H, \Dl u\rangle}{\rho^2} \ed u^i \wedge\ed u^j\\ 
&=&2|H|^2\ed u^i \wedge\ed u^j.
\end{eqnarray*}
Moreover we also have 
$$\dv \omega_H = 0$$
weakly since 
\begin{eqnarray*}
(\ed \ast \omega_H)^i_j &=& H^i\ed\ast\ed u^j - H^j\ed\ast\ed u^i + \ed H^i \wedge\ast\ed u^j - \ed H^j \wedge \ast \ed u^i \\
&=& (H^i_x u^j_x  - H^j_xu_x^i + H^i_y u^j_y- H^j_yu_y^i) \ed x\wedge \ed y \\
&=& (\langle H_x,u_y\rangle \fr{u_y^iu^j_x - u_y^ju_x^i}{\rho^2} +\langle H_y, u_x\rangle \fr{u_x^iu_y^j - u^j_xu_y^i}{\rho^2})\ed x\wedge \ed y \\
&=& 0.
\end{eqnarray*}
This tells us two things: Firstly that $\om$ satisfies condition $\dagger_{\eps}$ when 
$$|H|^2 Area(u(B_1))$$ is sufficiently small,
and secondly we also have that $\Dl u$ is a sum of Wente terms by writing
$$\omega_H=\ast \ed \eta$$and 
$$-\Dl u = \ast (\ed \eta \wedge \ed u).$$
The latter fact is obvious when $m=3$.

\section{Wente estimates and changes of frame}
We will use the following well known estimate, which follows from the results of \cite{clms} and \cite{feff_stein} but in a simpler form is due to \cite{wente}. We also use implicitly here the continuous embedding $W^{1,1}(B_1)\emb L^{2,1}(B_1)$ when $B_1 \In \R^2$. A proof of this fact can be found in \cite{helein_conservation}. \begin{theorem}\label{thm wente}
Suppose that $\phi\in W_0^{1,2}$ is a solution to 
$$\Dl \phi = \ast(\ed a \wedge \ed b)$$
with $a,b \in W^{1,2}(B_1,\R)$ on the unit disc $B_1\In\R^2$. Then $\phi \in W^{2,1}$ with 
$$\|\D^2 \phi\|_{L^1(B_1)}+\|\D \phi\|_{L^{2,1}(B_1)} + \|\phi\|_{C^0(B_1)} \leq C\|\D a\|_{L^2(B_1)}\|\D b\|_{L^2(B_1)}.$$
\end{theorem}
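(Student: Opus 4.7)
The central insight is that the right-hand side $\ast(\ed a\wedge\ed b)=a_xb_y-a_yb_x=:J(a,b)$ is a Jacobian determinant, whose integrability is substantially better than the $L^1$ regularity a direct calculation provides. Indeed, naive Calder\'on-Zygmund theory applied to $\Dl\phi\in L^1$ would yield only $\D^2\phi\in L^{1,\infty}$; the content of the theorem is that the Jacobian structure upgrades this by a full power of integrability into $L^1$.

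First, I would extend $a$ and $b$ to compactly supported $\tilde a,\tilde b\in W^{1,2}(\R^2)$ via the standard extension operator for $W^{1,2}(B_1)$, so that $\|\D\tilde a\|_{L^2(\R^2)}\leq C\|\D a\|_{L^2(B_1)}$ and similarly for $b$. The theorem of Coifman-Lions-Meyer-Semmes \cite{clms} then yields $J(\tilde a,\tilde b)\in\mathcal{H}^1(\R^2)$ with
$$\|J(\tilde a,\tilde b)\|_{\mathcal{H}^1(\R^2)}\leq C\|\D a\|_{L^2(B_1)}\|\D b\|_{L^2(B_1)}.$$
Set $\psi:=\Gamma*J(\tilde a,\tilde b)$, where $\Gamma$ is the Newton potential on $\R^2$. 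Since the second Riesz transforms are bounded on $\mathcal{H}^1$ (Fefferman-Stein \cite{feff_stein}), one has $\D^2\psi=R_iR_j(\Dl\psi)\in\mathcal{H}^1(\R^2)\In L^1(\R^2)$, again with the product bound on its norm.

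Now write $\phi=\psi|_{B_1}+h$, where $h$ is the harmonic function on $B_1$ with boundary data $h|_{\partial B_1}=-\psi|_{\partial B_1}$; since $\phi$ has zero trace, this decomposition is forced. The harmonic part can be controlled via its boundary data: a Wente-type $L^\infty$ bound on $\psi$, obtainable from the $\mathcal{H}^1$ estimate on $\Dl\psi$ through duality with BMO, yields $\|h\|_{C^0(B_1)}\leq C\|\D a\|_{L^2}\|\D b\|_{L^2}$, and standard interior estimates for harmonic functions transfer this to all the derivative norms we need. Combining, $\D^2\phi\in L^1(B_1)$ with the stated product estimate.

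The remaining two estimates reduce to Sobolev embeddings in two dimensions: the Lorentz refinement $W^{1,1}(B_1)\emb L^{2,1}(B_1)$, recalled for instance in \cite{helein_conservation}, applied to $\D\phi$ yields $\|\D\phi\|_{L^{2,1}(B_1)}\leq C\|\D^2\phi\|_{L^1(B_1)}$; and the chain $W^{2,1}\emb W^{1,(2,1)}\emb C^0$ (Stein's sharp embedding at the critical Lorentz exponent) then delivers the $C^0$ bound. The hard step, and the whole point of the theorem, is the Hardy-space gain encoded in CLMS, which I take as a black box; everything else is essentially bookkeeping around the Dirichlet boundary condition and Lorentz-space embeddings.
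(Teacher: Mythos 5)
Your route is the one the paper itself points to for Theorem \ref{thm wente} --- the paper gives no proof, only the citations to \cite{clms}, \cite{feff_stein} and the embedding $W^{1,1}(B_1)\emb L^{2,1}(B_1)$ from \cite{helein_conservation} --- and the interior part of your argument is sound: extend, apply Coifman--Lions--Meyer--Semmes, use the $\mathcal{H}^1$-boundedness of the double Riesz transforms, then the two-dimensional embeddings $W^{1,1}\emb L^{2,1}$ and $W^{1,(2,1)}\emb C^0$. One normalisation should be made explicit: replace $a$ by $a$ minus its mean over $B_1$ before extending (this changes nothing in $\ed a\wedge\ed b$), since the cut-off in the extension produces a zeroth-order term and the claimed bound $\|\D\tilde a\|_{L^2(\R^2)}\leq C\|\D a\|_{L^2(B_1)}$ needs Poincar\'e to absorb it.

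The genuine gap is your treatment of the harmonic correction $h=\phi-\psi$. The theorem asserts $\|\D^2\phi\|_{L^1(B_1)}$ and $\|\D\phi\|_{L^{2,1}(B_1)}$ on the \emph{whole} disc, and this global form is what the paper actually uses: condition $\dagger_{\eps}$ demands $\|\D b\|_{L^{2,1}(D)}\leq\eps$ on all of $D$, and Corollary \ref{cor_dbar} and Theorem \ref{thm: lamm_lin} claim $h^1$ bounds on all of $B_1$. The maximum principle does give $\|h\|_{C^0(B_1)}\leq\|\psi\|_{C^0(\partial B_1)}$ globally, but ``standard interior estimates for harmonic functions'' only control $\D h$ and $\D^2 h$ on $B_r$ for $r<1$, with constants degenerating as $r\to 1$; an $L^{\infty}$ bound on a harmonic function does not control $\|\D^2 h\|_{L^1}$ or $\|\D h\|_{L^{2,1}}$ up to the boundary (harmonic extensions of bounded, rapidly oscillating boundary data have Hessians that are not integrable near $\partial B_1$). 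To close this one needs a genuine boundary argument: for instance, represent $\phi$ via the Green function of the disc and estimate the reflected part of the kernel against $\mathcal{H}^1$-atoms, or exploit the regularity that the trace $\psi|_{\partial B_1}$ inherits from $\psi\in W^{2,1}(\R^2)$ so that its harmonic extension lies in $W^{2,1}\cap W^{1,(2,1)}$ of the full disc. This boundary step is precisely what the proof in \cite{helein_conservation} supplies; as written, your argument establishes the estimates only on $B_r\cemb B_1$.
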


We state here the following classical theorem of Koszul-Malgrange \cite[Theorem 1]{KM}, below we use $\frak{G}$ to denote a complex Lie group and $\frak{g}$ its Lie algebra. 
\begin{theorem}\label{theorem KM}
Let $U\In\C^n$ be open and $\al\in C^{\infty}(U,\frak{g}\otimes \wedge^{(0,1)}\T_{\C}^{\ast}\C^n)$. Then, for any open $V\In U$ there exists $f\in C^{\infty}(V,\frak{G})$ solving 
$$f^{-1}\db f = \al$$
if and only if 
\begin{equation}\label{eqn zerotwo curvature}
\db \al  + [\al,\al] = 0\end{equation}
on $V$. 
\end{theorem}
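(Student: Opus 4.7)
\textbf{Proof plan for Theorem \ref{theorem KM}.} The plan is to split this into its two directions, which are of very different character. The forward (necessity) direction is a direct computation: assuming $\al = f^{-1}\db f$, I would differentiate using $\db(f^{-1}) = -f^{-1}(\db f)f^{-1}$ and $\db^2=0$ to obtain
\[
\db\al \;=\; \db(f^{-1})\wedge\db f \;=\; -(f^{-1}\db f)\wedge(f^{-1}\db f) \;=\; -\al\wedge\al,
\]
which, with the paper's convention $[\al,\al]=\al\wedge\al$, gives $\db\al + [\al,\al] = 0$. This half needs no further comment.

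For the converse (sufficiency), the strategy would be as follows. First reduce to matrices by fixing a faithful complex-analytic representation $\rho:\mathfrak{G}\hookrightarrow GL(N,\C)$ and replacing $\al$ by $\rho_{\ast}\al \in \mathfrak{gl}(N,\C)\otimes\wedge^{(0,1)}\T^{\ast}_{\C}\C^n$. The equation $f^{-1}\db f = \al$ is then equivalent to asking that $f$ be a local holomorphic frame for the partial connection $D = \db + \al$ on the trivial bundle $V\times\C^N$, and the hypothesis $\db\al + [\al,\al] = 0$ is precisely the vanishing of the $(0,2)$-part of the curvature of $D$. I would then invoke the vector-bundle version of the Newlander--Nirenberg theorem (which is essentially the content of \cite{KM}): on a sufficiently small open $V$ this integrability condition guarantees that $D$ arises from a genuine holomorphic structure on $V\times\C^N$, and therefore admits a smooth local holomorphic frame $f$ with $\db f + f\al = 0$ and $f(p_0) = \mathrm{Id}$ at a chosen basepoint $p_0\in V$.

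To finish, one must check that $f$ is valued in $\rho(\mathfrak{G})$, not merely in $GL(N,\C)$. This follows from the fact that $\al$ takes values in $\rho_{\ast}\mathfrak{g}$: the Pfaffian system $\db f = f\al$ on $V\times GL(N,\C)$ is tangent at each $(p,g)$ to the left $\rho(\mathfrak{G})$-orbit of $g$, so the integral manifold through $(p_0,\mathrm{Id})$ remains inside $V\times\rho(\mathfrak{G})$. Composing with $\rho^{-1}$ then produces the $\mathfrak{G}$-valued $f$ asserted by the theorem.

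In the one-dimensional case $n=1$, which is the only case actually used elsewhere in the paper, there are no nonzero $(0,2)$-forms and the integrability hypothesis is automatic; the argument simplifies drastically. One solves the matrix PDE $\db f = f\al$ directly by Picard iteration: write $f = \mathrm{Id} + h$ and use the Cauchy--Green operator $T$ (which inverts $\db$ on discs) to recast the equation as the fixed point $h = T((\mathrm{Id}+h)\al)$. This contracts on a sufficiently small disc $V$ since $T$ maps $L^p$ continuously into $C^0$ for $p>2$, producing a smooth $f$ close to $\mathrm{Id}$ (hence in $\mathfrak{G}$, after a further shrinking if needed). The principal obstacle in arbitrary dimension is therefore the integrability step, namely extracting a holomorphic structure from the vanishing of the $(0,2)$-curvature (i.e.\ Newlander--Nirenberg); in dimension one this obstacle disappears entirely and one is left with a routine application of the Cauchy--Green kernel.
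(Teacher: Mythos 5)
The paper does not actually prove Theorem \ref{theorem KM}: it is quoted as \cite[Theorem 1]{KM} and used as a black box, so there is no in-paper argument to compare yours against. Judged on its own terms, your necessity direction is correct and complete (with the paper's convention $[\al,\al](X,Y)=[\al(X),\al(Y)]$ one indeed has $[\al,\al]=\al\wedge\al$, and the computation $\db\al=\db(f^{-1})\wedge\db f=-\al\wedge\al$ is right). Your $n=1$ sufficiency argument via the Cauchy--Green operator and Picard iteration is also the standard and correct route; it is essentially the proof of the non-smooth variant, Theorem \ref{theorem KM L21}, given in \cite{helein_conservation}, and smoothness of $f$ follows by bootstrapping the fixed-point identity $f=\mathrm{Id}+T(f\al)$ through the H\"older scales.

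Two caveats. First, for $n\geq 2$ your sufficiency argument is circular: the step ``invoke the vector-bundle version of the Newlander--Nirenberg theorem (which is essentially the content of \cite{KM})'' is precisely the statement being proved --- extracting a holomorphic structure from the vanishing of the $(0,2)$-curvature is the entire analytic content of Koszul--Malgrange, and you do not carry it out. If your intention is simply to cite \cite{KM} for that step, you are doing exactly what the paper does; if your intention is to supply a proof, this is a genuine gap (one needs either the Newlander--Nirenberg-type iteration or an induction on the complex dimension, solving the $\db$-equation one variable at a time and using the integrability condition to make the remaining components holomorphic in the variables already handled). Second, the reduction from $GL(N,\C)$ back to $\frak{G}$ via a ``Pfaffian system tangent to the left orbits'' is not rigorous as stated: the equation $\db f=f\al$ prescribes only the $(0,1)$-part of $\ed f$, so there is no Frobenius-type foliation by integral manifolds and no a priori reason the $GL(N,\C)$-valued solution you construct stays in $\rho(\frak{G})$. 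Since every application in this paper has $\frak{G}=GL(m,\C)$, this is harmless here, but for a general complex Lie group one should argue differently (e.g.\ construct $f$ directly in the group via the exponential map, as Koszul--Malgrange do).
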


In the case that $\al$ is the $(0,1)$-part of a connection form, the expression \eqref{eqn zerotwo curvature} is precisely the $(0,2)$ part of the curvature. As we have mentioned previously, in the case that $n=1$ it is clear that such an $f$ always exists since the condition \eqref{eqn zerotwo curvature} is vacuously true. The following is a non-smooth version of Theorem \ref{theorem KM} for $n=1$ and $\frak{G}=GL(m,\C)$, the proof of which can be found in \cite{helein_conservation}. 

\begin{theorem}\label{theorem KM L21}
Let $\om^{\zb}\in L^{2,1}(D, gl(k,\C)\otimes \wedge^{(0,1)}\T^{\ast}_{\C}\C)$ then there exists $\eps >0 $ such that whenever $\|\om\|_{L^{2,1}(D)}\leq \eps$ there is a $Q\in C^0\cap W^{1,(2,1)}_{loc}(D,GL(k,\C))$ such that 
$$\db Q = -\omega^{\zb}Q$$
and 
$$\|\rm{dist}(Q,Id)\|_{L^{\infty}(D)} \leq \fr13.$$
\end{theorem}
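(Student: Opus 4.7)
\textbf{Proof proposal for Theorem \ref{theorem KM L21}.} The plan is a Banach fixed--point scheme for $R := Q - I$, using a right inverse of $\db$ on the disc. Write the equation as $\db R = -\om^{\zb}(I+R)$, and let $T$ denote the Cauchy--Pompeiu operator
$$
(Tf)(z) := -\fr{1}{\pi}\int_{D}\fr{f(w)}{z-w}\,dA(w),
$$
which satisfies $\db Tf = f$ in $\mathcal{D}'(D)$. The crucial analytic input is that $T$ is bounded $L^{2,1}(D) \to L^{\infty}(D)$: indeed, by the generalised H\"older inequality in Lorentz spaces and the fact that $w\mapsto |z-w|^{-1}$ lies in $L^{2,\infty}(D)$ uniformly in $z\in D$, one has $|Tf(z)| \le C_0\|f\|_{L^{2,1}(D)}$.

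With this at hand, define on the closed ball $B := \{R\in L^{\infty}(D,gl(k,\C)) : \|R\|_{L^{\infty}}\le \fr13\}$ the map
$$
\Phi(R) := -T\bigl(\om^{\zb}(I+R)\bigr).
$$
From the bound on $T$ together with $\|\om^{\zb}(I+R)\|_{L^{2,1}} \le \|\om\|_{L^{2,1}}(1+\|R\|_{L^{\infty}})$, we get $\|\Phi(R)\|_{L^{\infty}} \le \fr{4}{3}C_0 \eps$ and $\|\Phi(R_1)-\Phi(R_2)\|_{L^{\infty}} \le C_0\eps\|R_1-R_2\|_{L^{\infty}}$. Choosing $\eps < 1/(4C_0)$ makes $\Phi$ a self-map of $B$ and a contraction, so Banach's theorem produces a unique fixed point $R\in B$. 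Setting $Q := I+R$ yields $\db Q = -\om^{\zb}Q$ in $\mathcal{D}'(D)$ with $\|Q - I\|_{L^{\infty}(D)} \le \fr13$, and in particular $Q(z)\in GL(k,\C)$ pointwise since $|R(z)| < 1$.

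For the regularity, $Q\in L^{\infty}$ and $\om^{\zb}\in L^{2,1}$ give $\om^{\zb}Q \in L^{2,1}(D)$, so $\db R = -\om^{\zb}Q \in L^{2,1}$ directly, while $\de R = -\de T(\om^{\zb}Q)$ is controlled by the Beurling--Ahlfors transform, which is bounded on $L^{2,1}$ by real interpolation from its classical $L^p$-boundedness for $1<p<\infty$. Hence $\nabla R\in L^{2,1}_{loc}(D)$, i.e.\ $Q\in W^{1,(2,1)}_{loc}(D,GL(k,\C))$. Continuity of $Q$ follows either from the two-dimensional embedding $W^{1,(2,1)}_{loc}\emb C^0_{loc}$, or from approximating $\om^{\zb}$ in $L^{2,1}$ by smooth compactly supported forms and using uniform convergence of $T$ applied to such approximants.

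The main obstacle, and the only subtle point, is the endpoint mapping property $T : L^{2,1}(D)\to L^{\infty}(D)$. This is precisely what makes $L^{2,1}$ the natural borderline space for the fixed-point scheme to close in $L^{\infty}$: it is the predual of $L^{2,\infty}$, the space where the Cauchy kernel $1/|z|$ naturally lives in dimension two. Without this endpoint bound one could only run a Picard iteration for $\om\in L^p$ with $p>2$ via Sobolev embedding, missing the critical $L^{2,1}$ case which is crucial for the applications in the paper.
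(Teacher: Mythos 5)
Your proposal is correct, and it is essentially the argument the paper relies on: the paper does not prove Theorem \ref{theorem KM L21} itself but defers to H\'elein \cite{helein_conservation}, whose proof is exactly this Banach fixed-point scheme built on the Cauchy--Pompeiu operator, the endpoint bound $T:L^{2,1}(D)\to L^{\infty}(D)$ via the $L^{2,1}$--$L^{2,\infty}$ duality, and the Beurling transform plus the embedding $W^{1,(2,1)}\emb C^0$ in two dimensions for the regularity and continuity of $Q$. No gaps; you have correctly identified the $L^{2,1}\to L^{\infty}$ mapping property as the one nontrivial ingredient that makes $L^{2,1}$ the critical space here.
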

It follows from standard estimates for harmonic maps that the smallness condition on $\|\om\|_{L^{2,1}}$ cannot be dropped in order that we keep the $L^{\infty}$ estimate on $Q$. For instance if one considers a sequence of harmonic maps $\{u_n\}:B_1\to S^2$ with uniformly bounded energy, that undergoes bubbling, then one has $\|\D u_n\|_{L^{2,1}}$ is uniformly bounded.\footnote{In this instance we know there exist functions $B^i_j$ such that $$\Dl u = \ast (\ed B^i_j \wedge \ed u^j)$$ and $$\|\D B^i_j\|_{L^2}\leq C\|\D u\|_{L^2}.$$See \cite{helein_conservation}.} We also know that $\al_n = \de u_n$ solves 
$$\db_{\omega_n}\al_n = 0$$
with $\|\om_n\|_{L^{2,1}}\leq C\|\D u_n\|_{L^{2,1}}$ so that if one could find such maps $Q_n$, bounded in $L^{\infty}$ then we could conclude that $\|\D u_n\|_{L^{\infty}}$ is uniformly bounded, contradicting the assumption that the maps undergo bubbling. 

We would also like to recall some results about existence of Coulomb (or Uhlenbeck see \cite{uhlenbeck_lp}) gauges. We provide a proof of the following, communicated to us by Ernst Kuwert, as we have not seen it elsewhere, although similar Theorems are proved in \cite{riviere_inventiones} and \cite{schikorra_frames}.  
\begin{theorem}\label{theorem Coulomb}
Let $\om\in L^2(B_1, u(m)\otimes\wedge^1 \T^{\ast}\R^2)$ then we can find maps $P\in W^{1,2}(B_1,U(m))$ and $\eta \in W_0^{1,2}(B_1, u(m) \otimes \wedge^2 \T^{\ast} \R^2)$ such that 
$$P^{-1}\ed P + P^{-1}\om P = \dv \eta. $$
Moreover  
$$ \|\D \eta\|_{L^2(B_1)} \leq \|\om\|_{L^2(B_1)}$$
and 
$$\|\D P\|_{L^2(B_1)} \leq 2\|\om\|_{L^2(B_1)}.$$
\end{theorem}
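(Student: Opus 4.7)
The plan is to obtain $P$ by a direct variational argument, minimising the functional
\begin{equation*}
E(Q):=\int_{B_1}\bigl|Q^{-1}\ed Q+Q^{-1}\om Q\bigr|^2=\int_{B_1}|\ed Q+\om Q|^2
\end{equation*}
over $Q\in W^{1,2}(B_1,U(m))$ (the equality uses that $|Q|$ is unitary, so conjugation by $Q^{-1}$ preserves the Frobenius norm). First I would run the direct method: since $|Q|\le\sqrt m$ pointwise, any minimising sequence $Q_n$ satisfies $\|\D Q_n\|_{L^2}\le\sqrt{E(Q_n)}+\|\om\|_{L^2}$, which is uniformly bounded, so a subsequence converges weakly in $W^{1,2}$ and strongly in every $L^p$, $p<\infty$. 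The constraint $Q(x)\in U(m)$ passes to the a.e. limit; meanwhile $\ed Q_n\wtu\ed Q$ weakly and $\om Q_n\to\om Q$ strongly in $L^2$, so $E$ is weakly lower semicontinuous and a minimiser $P$ exists. Comparing with the competitor $Q\equiv\mathrm{Id}$ gives $E(P)\le E(\mathrm{Id})=\|\om\|_{L^2(B_1)}^2$, from which the claimed bound on $\D P$ follows by the triangle inequality.

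Next I would derive the Euler--Lagrange equation by varying through $P_t=P\exp(tA)$ with $A\in C^\infty(\overline{B_1},u(m))$ (no boundary condition imposed on $A$, since $P$ itself has free boundary). Setting $\al:=P^{-1}\ed P+P^{-1}\om P\in L^2(B_1,u(m)\otimes\wedge^1\T^*\R^2)$, a direct computation gives
\begin{equation*}
\frac{d}{dt}\Big|_{t=0}\bigl(P_t^{-1}\ed P_t+P_t^{-1}\om P_t\bigr)=\ed A+[\al,A].
\end{equation*}
The contribution of the bracket to the first variation of $E$ vanishes pointwise by cyclicity of the trace, $\operatorname{tr}(\al_i[\al_i,A])=0$; integrating by parts in the remaining term yields
\begin{equation*}
0=\int_{B_1}\langle\al,\ed A\rangle=\int_{B_1}\langle\dv\al,A\rangle-\int_{\partial B_1}\langle\al(\nu),A\rangle
\end{equation*}
for every test $A$, giving simultaneously $\dv\al=0$ in $B_1$ and the natural boundary condition $\al(\nu)=0$ on $\partial B_1$ (the latter is precisely what comes out of non-compactly-supported variations).

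Now I would use the two-dimensional Hodge theoretic fact that a co-closed $u(m)$-valued $1$-form $\al$ on the disc with vanishing normal component on $\partial B_1$ is globally co-exact with $W_0^{1,2}$ primitive. Concretely, writing the sought $\eta=g\,\ed x\wedge\ed y$ with $g:B_1\to u(m)$ reduces $\al=\dv\eta$ to $\al=\ast\ed g$; the condition $\dv\al=0$ says $\Dl g$ is determined, and the boundary condition $\al(\nu)=0$ is equivalent to $\tau\cdot\ed g=0$ on $\partial B_1$, i.e.\ $g$ is constant on $\partial B_1$, so we may subtract that constant to place $g\in W_0^{1,2}(B_1,u(m))$. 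Equivalently I would just define $g$ by solving the Dirichlet problem $\Dl g=\ed\al$ with $g|_{\partial B_1}=0$ and verify $\al=\ast\ed g$ using the Euler--Lagrange equation. The pointwise identity $|\dv\eta|=|\D g|=|\ast\ed g|=|\al|$ then gives
\begin{equation*}
\|\D\eta\|_{L^2(B_1)}=\|\al\|_{L^2(B_1)}=\sqrt{E(P)}\le\|\om\|_{L^2(B_1)},
\end{equation*}
while $\|\D P\|_{L^2}\le\|\al\|_{L^2}+\|\om\|_{L^2}\le 2\|\om\|_{L^2}$ as required.

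The only delicate point is the derivation and interpretation of the boundary condition for merely $W^{1,2}$ data; I would justify it by first testing the variation against compactly supported $A$ (obtaining $\dv\al=0$ distributionally, hence $\al\in L^2$ with $\dv\al\in H^{-1}$, which is enough to make sense of the normal trace $\al(\nu)\in H^{-1/2}(\partial B_1)$) and then re-running the integration by parts with general $A$. Everything else is routine: the Lie bracket cancellation is the structural reason one only needs $P$ into $U(m)$ rather than a larger group, and the comparison with $\mathrm{Id}$ is what packages the bounds with the clean constants $1$ and $2$.
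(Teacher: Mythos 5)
Your proposal is correct and follows essentially the same route as the paper: minimise $E(Q)=\int_{B_1}|\ed Q+\om Q|^2$ by the direct method, compare with $Q\equiv\mathrm{Id}$ to get the clean constants, derive the Euler--Lagrange equation against non-compactly-supported $u(m)$-valued variations (the bracket term dying by cyclicity of the trace), and use two-dimensional Hodge theory on the disc to identify the transformed connection as $\dv$ of a $W_0^{1,2}$ two-form. The only cosmetic difference is in the last step: the paper first Hodge-decomposes $\om_P=\ed a+\dv\eta+h$ and tests with $a+\gamma$ (where $\ed\gamma=h$) to kill $\ed a+h$, whereas you extract $\dv\al=0$ together with the natural boundary condition $\al(\nu)=0$ and then solve a Dirichlet problem --- the same Hodge-theoretic content packaged differently.
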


\begin{proof}
The Coulomb gauge $P$ is found by minimising the following energy
$$E(P): = \int_{B_1} |P^{-1}\ed P + P^{-1}\om P|^2 = \int_{B_1} |\ed P + \om P|^2,$$
which effectively is trying to minimise the $L^2$ distance of our connection to the exterior derivative. 

Clearly $P\equiv Id$ is admissible so that we choose a minimising sequence $\{P_n\}$ with
$$E(P_n) \leq E(Id) = \|\om\|_{L^2}^2.$$
We also have that 
\begin{eqnarray*}
\|\D P_n\|_{L^2(B_1)}^2 &=& E(P_n) - \|\om\|_{L^2}^2 - \int_{B_1} 2\langle \ed P_n, \om P_n\rangle \\
&\leq& \|\om\|_{L^2}^2 -\|\om\|_{L^2}^2 + 2\eps \|\D P_n\|_{L^2(B_1)}^2 + \fr{1}{2\eps}\|\om\|_{L^2}^2
\end{eqnarray*}
by Young's inequality. Therefore 
$$\|\D P_n\|_{L^2(B_1)}^2 \leq 4\|\om\|_{L^2}^2$$
and we can find $P\in W^{1,2}(B_1,gl(m,\C))$ such that $P_n\wtu P$ in $W^{1,2}$. This tells us that in particular, $\ed P_n \wtu \ed P$ in $L^2$ and $P_n \to P$ pointwise almost everywhere. It follows that $P\in W^{1,2}(B_1, U(m))$ and that  
\begin{eqnarray*}
E(P_n) - E(P) &=& \int_{B_1} |\D P_n|^2 - |\D P|^2 + |\om|^2 - |\om|^2 +  2\langle \ed P_n, \om P_n \rangle - 2\langle \ed P ,\om P\rangle \\
&=& \int_{B_1} |\D P_n|^2 - |\D P|^2 +2\langle \ed P_n, \om(P_n-P)\rangle + 2\langle \ed P_n -\ed P ,\om P\rangle. 
\end{eqnarray*}
Now, 
$$\int_{B_1} |\omega(P_n-P)|^2= \int_{B_1} \langle \om P_n ,\om P_n \rangle +\langle \om P,\om P \rangle -2\langle \om P_n,\om P \rangle\to 0$$
as $n\to \infty$ by Lebesgue's dominated convergence theorem.

We also have 
$$\int_{B_1} \langle \ed P_n -\ed P ,\om P\rangle \to 0$$
and 
$$\liminf \|\D P_n\|_{L^2}^2 \geq \|\D P\|_{L^2}^2.$$
Putting these things together yields the lower semi-continuity of $E$ and 
$$E(P)\leq \liminf E(P_n)\leq \|\om\|_{L^2}^2.$$

Let $\omega_P := P^{-1}\ed P + P^{-1}\om P$ and using the fact that $P$ is a critical point of $E$ we consider variations $\phi \in C^1(\bar{B}_1, u(m))$ and we get 
$$\fr{\ed}{\id t}\int_{B_1} |\ed(Pe^{t\phi}) + \om Pe^{t\phi}|^2 \,\,\vlinesub{t}=0.$$
Using 
$$|\ed(Pe^{t\phi}) + \om Pe^{t\phi}|^2 = |P\ed(e^{t\phi}) + P\omega_P e^{t\phi}|^2 = |\ed(e^{t\phi})+\omega_P e^{t\phi}|^2$$
it can be easily checked that 
$$0=\fr{\ed}{\id t}\int_{B_1} |\ed(Pe^{t\phi}) + \om Pe^{t\phi}|^2 \,\,\vlinesub{t} = 2\int_{B_1}\langle \ed \phi , \om_P\rangle $$
for any $\phi\in W^{1,2}(B_1)$ by approximation. In fact we can allow $\phi \in L^2_{loc}(B_1)$ with $\D \phi \in L^2(B_1)$ by approximating with $\phi_r(x) = \phi (rx)$ as $r\uu 1$.

By a linear Hodge decomposition we know there exists $\al \in W_0^{1,2}(B_1,u(m))$, $\eta \in W_0^{1,2}(B_1, u(m)\otimes \wedge^2 \T^{\ast}\R^2)$ and a harmonic one form $h\in L^2(B_1,u(m)\otimes \wedge^1\T^{\ast}\R^2)$ - see for instance \cite{morrey} - such that 
$$\omega_P = \ed \al + \dv \eta + h.$$
Let $\gamma \in L^2_{loc}(B_1)$ be the harmonic function given by 
$$\ed \gamma = h$$
giving $\D \gamma \in L^2(B_1).$ Thus we have that $\al + \gamma$ is an admissible test function. 
This fact, coupled with 
$$\int_{B_1} \langle \ed \phi ,\dv \eta\rangle = 0 $$
for all such $\phi$ gives 
$$\int_{B_1} \langle \ed \phi, \ed \al + h\rangle = 0,$$
and therefore 
$$0=\int_{B_1} \langle \ed \al + h , \ed \al + h \rangle = \int_{B_1} |\ed \al + h|^2 = \int_{B_1} |\ed \al|^2 + |h|^2.$$
Therefore $$\ed P + \om P = P\dv \eta$$ giving the final estimate and completing the proof. 
\end{proof}

\bibliographystyle{plain}

\begin{thebibliography}{1}

\bibitem{clms}
R.~Coifman, P.-L. Lions, Y.~Meyer, and S.~Semmes.
\newblock Compensated compactness and {H}ardy spaces.
\newblock {\em J. Math. Pures Appl. (9)}, 72(3):247--286, 1993.

\bibitem{colding_minicozzi_width_ricci}
Tobias~H. Colding and William~P. Minicozzi, II.
\newblock Width and finite extinction time of {R}icci flow.
\newblock {\em Geom. Topol.}, 12(5):2537--2586, 2008.

\bibitem{feff_stein}
C.~Fefferman and E.~M. Stein.
\newblock {$H^{p}$} spaces of several variables.
\newblock {\em Acta Math.}, 129(3-4):137--193, 1972.

\bibitem{goldberg}
David Goldberg.
\newblock A local version of real {H}ardy spaces.
\newblock {\em Duke Math. J.}, 46(1):27--42, 1979.

\bibitem{gruter}
Michael Gr{\"u}ter.
\newblock Conformally invariant variational integrals and the removability of
  isolated singularities.
\newblock {\em Manuscripta Math.}, 47(1-3):85--104, 1984.

\bibitem{helein_conservation}
Fr{\'e}d{\'e}ric H{\'e}lein.
\newblock {\em Harmonic maps, conservation laws and moving frames}, volume 150
  of {\em Cambridge Tracts in Mathematics}.
\newblock Cambridge University Press, Cambridge, second edition, 2002.
\newblock Translated from the 1996 French original, With a foreword by James
  Eells.

\bibitem{KM}
J.-L. Koszul and B.~Malgrange.
\newblock Sur certaines structures fibr\'ees complexes.
\newblock {\em Arch. Math. (Basel)}, 9:102--109, 1958.

\bibitem{lamm_lin}
Tobias Lamm and Longzhi Lin.
\newblock Estimates for the energy density of critical points of a class of
  conformally invariant variational problems.
\newblock To appear {\em Adv. Calc. Var}, 2013.

\bibitem{morrey}
Charles~B. Morrey, Jr.
\newblock {\em Multiple integrals in the calculus of variations}.
\newblock Die Grundlehren der mathematischen Wissenschaften, Band 130.
  Springer-Verlag New York, Inc., New York, 1966.

\bibitem{moser_regularity}
R. Moser
\newblock An ${L}^p$ regularity theory for harmonic maps. 
\newblock http://opus.bath.ac.uk/29504/
\newblock Preprint, 2012.

\bibitem{riviere_inventiones}
Tristan Rivi{\`e}re.
\newblock Conservation laws for conformally invariant variational problems.
\newblock {\em Invent. Math.}, 168(1):1--22, 2007.

\bibitem{riviere_laurain}
Tristan Rivi\`ere and Paul Laurain.
\newblock Angular energy quantization for linear elliptic systems with
  antisymmetric potentials and applications.
\newblock To appear {\em Anal. PDE}, 2013.

\bibitem{schikorra_frames}
Armin Schikorra.
\newblock A remark on gauge transformations and the moving frame method.
\newblock {\em Ann. Inst. H. Poincar\'e Anal. Non Lin\'eaire}, 27(2):503--515,
  2010.

\bibitem{Sh_To}
Ben Sharp and Peter Topping.
\newblock Decay estimates for {R}ivi\`ere's equation, with applications to
  regularity and compactness.
\newblock {\em Trans. Amer. Math. Soc.}, 365(5):2317--2339, 2013.


\bibitem{uhlenbeck_lp}
Karen~K. Uhlenbeck.
\newblock Connections with {$L^{p}$} bounds on curvature.
\newblock {\em Comm. Math. Phys.}, 83(1):31--42, 1982.

\bibitem{wente}
Henry~C. Wente.
\newblock An existence theorem for surfaces of constant mean curvature.
\newblock {\em J. Math. Anal. Appl.}, 26:318--344, 1969.

\end{thebibliography}

\sc Department of Mathematics, South Kensington Campus, Imperial College London, LONDON, SW7 2AZ, UK, ben.g.sharp@gmail.com
\end{document}